\newtheorem{thm}{Theorem}
\newtheorem{dfn}[thm]{Definition}
\newtheorem{lem}[thm]{Lemma}
\newtheorem{exm}[thm]{Example}
\newtheorem{prop}[thm]{Proposition}
\newtheorem{rem}[thm]{Remark}
\newtheorem{cor}[thm]{Corollary}
\newcommand{\dK}{\mathds{K}}
\newcommand{\dN}{\mathds{N}}
\newcommand{\dR}{\mathds{R}}
\newcommand{\dC}{\mathds{C}}
\newcommand{\ii}{\mathrm{i}}
\newcommand{\cL}{\mathcal{L}\,}
\newcommand{\cD}{\mathcal{D}\,}
\newcommand{\cH}{\mathcal{H}\,}
\newcommand{\Lin}{\mathrm{Lin}\,}
\newcommand{\her}{\mathrm{her}\,}
\newcommand{\cA}{\mathcal{A}}
\newcommand{\cB}{\mathcal{B}}
\newcommand{\cN}{\mathcal{N}}
\newcommand{\cS}{\mathcal{S}}
\author{Konrad Schm\"udgen}
\address{University of Leipzig, Mathematical Institute, Augustusplatz 10/11, D-04109 Leipzig, Germany}
\email{\tt schmuedgen@math.uni-leipzig.de}
\begin{document}

\begin{abstract}
A  $*$-bimodule for a unital $*$-algebra $A$ is an $A$-bimodule $X$ which is  a vector space with involution $x\mapsto x^+$ satisfying $(a\cdot x\cdot b)^+=b^+\cdot x^+\cdot b^+$ for $x\in X$ and $a,b\in A$. An algebraic model for $*$-bimodules is given. Hilbert space representations of $*$-bimodules are defined and studied. A  GNS-like representation theorem is obtained.
\end{abstract}

\maketitle

\textbf{AMS  Subject  Classification (2000)}.
 46L50, 47L60.\\

\textbf{Key  words:} bimodule, algebra with involution, unbounded representation

\section{Introduction}

This paper deals with an algebraic structure that seems to be not yet studied in the literature (at least as far as the author is aware): {\it $*$-bimodules} of general $*$-algebras.

Let $A$ be a real or complex unital $*$-algebra with involution denoted by $a\mapsto a^+$. A {\it $*$-bimodule} for $A$ is an $A$-bimodule $X$ which is also areal or complex vector space with involution $x\mapsto x^+$ satisfying the compatibility condition
\begin{align}
(a\cdot x\cdot b)^+=b^+\cdot x^+\cdot b^+, \quad {\rm for}~~~x\in X,~ a,b\in A,
\end{align}
where the dot $\cdot$ stands for the left and right actions of $A$ on $X$.

For instance, if $A$ is the polynomial $*$-algebra $\dC[x_1,\dots,x_d]$ and $f_1,\dots,f_k$ are real-valued functions on $\dR^d$, then $X:=f_1\cdot A+\cdots+f_k\cdot A$ is obviously a $*$-bimodule for $A$, with multiplication of functions as left and right actions and complex conjugation as involution.

The main aim of this paper is to define and discuss notions of Hilbert space representations of $*$-bimodules and to develop a GNS-like construction. 

This paper is organized as follows. Section \ref{defex} contains general definitions and a large number of examples of $*$-bimodules.
In Section \ref{algreal}, we develop an algebraic model for $*$-bimodules. We show that each left action of $A$ on a vector space $D$ yields a $*$-bimodule structure on the vector space $B(D,D)$ is sesquilinear forms on $D$. 
Section \ref{Hilbrep} is the heart of this paper. We give three possible definitions of  $*$-representations of  $*$-bimodules on  inner product spaces. 
The main result of Section \ref{gnslike}  can be interpreted as an extension of the GNS construction to $*$-bimodules. 
In the final Section \ref{anexampleweyl} we elaborate some examples of the GNS-like construction.

A large part of this  paper deals with unbounded Hilbert space representations of $*$-algebras and  general $*$-algebraic constructions.
For $*$-representations of  $*$-algebras we refer to the author's books \cite{schm1990}, \cite{schm2020}, for $*$-representations of quasi $*$-algebras to \cite{tfr} and for $*$-representations of partial $*$-algebras to the monograph \cite{ait}.

Let us recall the notion of a $*$-representation of a $*$-algebra $A$. Let $(\cD,\langle\cdot,\cdot\rangle)$ be a complex inner product  space. A {\it $*$-representation} of  $A$ on $\cD$ is an algebra homomorphism $\rho$ of $A$ into the algebra of linear operators on $\cD$ such that
\begin{align*}
\langle \rho(a)\varphi,\psi\rangle=\langle\varphi,\rho(a^+)\psi\rangle, \quad a\in A,~ \varphi,\psi\in \cD.
\end{align*}
If $A$ is unital with unit $1$, then $\rho$  is called {\it nondegenerate} if $\rho (1)\varphi=\varphi$ for all $\varphi\in \cD$.
\section{Definition and Examples of $*$-Bimodules}\label{defex}

We begin with   a number of standard definitions. All vector spaces and algebras are either over the real field or over the complex field. Let $\dK$ denote $\dC$ or $\dR$.

A  {\it left $A$-module} is a vector space $X$ over   $\dK$ equipped with a bilinear mapping $(a,x)\mapsto a\cdot x$ of $A\times X$ in $X$, called then a left action of $A$ on $X$, such that
 \begin{align*}
ab\cdot x=a\cdot (b\cdot x), \quad 1\cdot x=x \quad {\rm for}~~~ a,b\in A, x\in X.
\end{align*}
Similarly, a vector space $X$ is   {\it right $A$-module} if $X$ is equipped with a mapping $(x,a)\mapsto x\cdot $ of $X\times A$ in $X$, called a right action of $A$ on $X$, such that
 \begin{align*}
x\cdot (ab)=(x\cdot a) \cdot b, \quad x\cdot 1=x \quad {\rm for}~~~ a,b\in A, x\in X.
\end{align*}
If  no ambiguity can arise, the symbol $\cdot$ is used to denote left or right actions. If  we want to distinguish between different actions, we use a symbol such as $\rho$ to write $\rho(a)x=a \cdot x$ or $x\, \rho(a)=x \cdot a$, respectively.

An {\it $A$-bimodule} is a vector space  $X$ which is a left $A$-module and a right $A$-module such that the corresponding left and right actions commute, that is, we have 
\begin{align}\label{combimod}
(a\cdot x )\cdot b=a\cdot (x \cdot b) \quad {\rm for}~~~ a,b\in A, x\in X.
\end{align}
For an $A$-bimodule $X$, we can omit the brackets and write simply $a\cdot x \cdot b$ for the expression $a\cdot (x \cdot b) =(a\cdot x )\cdot b$.

\begin{rem} In the standard algebra literature (see, e.g., \cite{lam1999}), bimodules of rings are defined as abelian groups with  left and right module structures satisfying (\ref{combimod}). $*$-Bimodules for $*$-rings can be also defined and treated in this setting. However, since our main motivation lies in Hilbert space representations of $*$-algebras, we prefer to work with vector spaces as bimodules and $*$-bimodules. 
\end{rem}
A {\it $*$-vector space} is a vector space $X$ together with an {\it involution} $x \mapsto x^+$ of $X$ into itself, that is,
\begin{align}
(\alpha  x +\beta y)^+=\overline{\alpha}x^+ + \overline{\beta} y^+~~~ {\rm and}~~~ (x^+)^+=x ~~~ {\rm for}~~~ \alpha,\beta\in\dK,~~   x, y\in X.
\end{align}
For a   $*$-vector space $X$ its {\it hermitian part} $X_{\her}:=\{ x\in X: x=x^+\}$ is a real vector space. In the case $\dK=\dC$ each element $x\in X$ can uniquely written as $x=x_1+\ii x_2$ with $x_1,x_2\in X_{\her}$.

A {\it $*$-algebra} $A$ is an algebra over $\dK$, equipped with an involution $a\mapsto a^+$ such that $A$ is a $*$-vector space and
\begin{align}
(ab)^+=b^+a^+ \quad {\rm for}~~~a,b,\in A.
\end{align}
If $A$ is   unital, the unit element is always denoted by $1$ and it satisfies $1^+=1$.

The involution of a $*$-vector space can be used to define  a left action from a right action and vice versa, as the following simple lemma shows.
\begin{lem}
 Suppose $X$ is a $*$-vector space which is a right $A$-module. Then $X$ becomes a left $A$-bimodule with left action defined by
\begin{align}\label{leright}
a\cdot x:=(x^+\cdot a^+)^+ ,\quad a\in A, x\in X.
\end{align}
\end{lem}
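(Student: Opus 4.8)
The plan is to verify directly the three defining axioms of a left $A$-module for the map $(a,x)\mapsto a\cdot x:=(x^+\cdot a^+)^+$, using only that $X$ is a $*$-vector space, that the given right action is bilinear and satisfies the right-module axioms, and that $A$ is a unital $*$-algebra. No auxiliary construction is needed; everything reduces to bookkeeping with the two involutions.

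First I would check bilinearity. Fixing $x\in X$, for $\alpha,\beta\in\dK$ and $a,b\in A$ the definition together with conjugate-linearity of the involution on $A$ and bilinearity of the right action gives $(\alpha a+\beta b)\cdot x=(x^+\cdot(\overline{\alpha}\,a^++\overline{\beta}\,b^+))^+=(\overline{\alpha}\,(x^+\cdot a^+)+\overline{\beta}\,(x^+\cdot b^+))^+$, and applying the involution on $X$ (conjugate-linear again) turns the two bars into nothing, yielding $\alpha(a\cdot x)+\beta(b\cdot x)$. Linearity in $x$ is entirely analogous, the point in both cases being that the involution is applied an even number of times, so genuine $\dK$-linearity — not conjugate-linearity — comes out.

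Next comes the module law, which is the only step where anything has to click. Using $(ab)^+=b^+a^+$ and then the right-module axiom $x^+\cdot(b^+a^+)=(x^+\cdot b^+)\cdot a^+$, together with $(y^+)^+=y$, one computes
\begin{align*}
ab\cdot x &= (x^+\cdot(ab)^+)^+ = ((x^+\cdot b^+)\cdot a^+)^+,\\
a\cdot(b\cdot x) &= \bigl(((x^+\cdot b^+)^+)^+\cdot a^+\bigr)^+ = ((x^+\cdot b^+)\cdot a^+)^+,
\end{align*}
so the two sides agree. Thus the order-reversal in $(ab)^+$ is exactly what matches the associativity of the right action — this is the heart of the argument, though it is hardly an obstacle. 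Finally, $1\cdot x=(x^+\cdot 1^+)^+=(x^+\cdot 1)^+=(x^+)^+=x$ because $1^+=1$, so the unit acts as the identity, completing the verification.

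I expect no real difficulty; the computation is routine and self-contained. The one thing to keep straight is that each identity uses the involution of $X$ an even number of times and the anti-multiplicativity of the involution of $A$ in the one spot where it is compatible with the right-module axiom. One may also note that what is actually produced is a left $A$-module structure on $X$; no compatibility between the involution and the right action, and no previously given left action, is assumed or used.
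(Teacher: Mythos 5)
Your proof is correct and follows essentially the same route as the paper's: a direct verification that $1\cdot x=x$ and $a\cdot(b\cdot x)=(ab)\cdot x$ via the anti-multiplicativity of the involution on $A$ and the right-module axiom. The only difference is that you additionally spell out the bilinearity check (the even number of involutions cancelling the conjugations), which the paper leaves implicit.
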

\begin{proof}
We have $1\cdot x=(x^+ \cdot 1^+)^+= (x^+)^+=x$ and
\begin{align*}
\hspace{3.1cm}a\cdot[b\cdot x]&=a \cdot [(x^+\cdot b^+)^+]= (((x^+\cdot b^+)^+)^+ \cdot a^+)^+\\&=((x^+\cdot b^+) \cdot a^+)^+=(x^+\cdot b^+ a^+)^+
\\ &=(x^+\cdot (ab)^+)^+ =(ab)\cdot x. \qedhere 
\end{align*}
\end{proof}

\smallskip

The following definition introduce the main concept of which this paper is about.
\begin{dfn} A \emph{$*$-bimodule} for $A$ is an $A$-bimodule $X$ which is a $*$-vector space with involution $x\to x^+$ satisfying the compatibility condition
\begin{align}\label{starcon}
(a\cdot x\cdot b)^+ =b^+\cdot x^+\cdot a^+ \quad{\rm for}~~~x\in X, a,b\in A.
\end{align}
\end{dfn}
Note that for a $*$-bimodule $X$ the left action of $A$ is obtained from the right action of $A$ and the involutions of $A$ and $X$ by formula (\ref{leright}). This simple observation is very useful for many considerations.

The simplest $*$-bimodule $X$  is, of course, the $*$-algebra $A$ itself, with left and right multiplications as left and right actions, respectively. 
Another special case is  described in the following definition.
\begin{dfn}\label{quasidef}
A \emph{quasi-$*$-algebra}\footnote{Quasi $*$-algebras were introduced by G. Lassner \cite{las84} with a different definition. He defined {\it topological quasi-$*$-algebras}. This means that $X$ is equipped with a locally convex topology such that the algebraic operations are continuous and $A$ is dense in $X$, see \cite[Definition 3.4.7]{schm1990} or \cite[Definition 5.1.11]{tfr} for a precise formulation. The density condition is crucial for many considerations. It should be noted the the bimodule axiom (\ref{combimod}) is missing in \cite{las84}; it was added later in \cite{schm1990}. Quasi $*$-algebras as in Definition \ref{quasidef} are defined in \cite[Subsection 1.1]{tfr}.} is a pair $(A,X)$ of a unital $*$-algebra $A$ and a $*$-bimodule $X$ such that  $A\subseteq X$ and the actions  and the involution of elements of $A\subseteq X$ on $A$ are given by the multiplications  and the involution of $A$, respectively.
\end{dfn}

Thus, for quasi $*$-algebras we have the very special situation that the $*$-algebra $A$ is contained in the $*$-bimodule $X$. Of course,  this is not true in general.  Quasi $*$-algebras are much closer to $*$-algebras and most of the methods used in their study 
 do not apply  to $*$-bimodules. Nevertheless quasi $*$-algebras might be the first case to look at when we consider a problem for $*$-bimodules. An extensive treatment of quasi $*$-algebras and their representation theory is provided in the  recent monograph \cite{tfr} by C. Trapani and M. Fragoulopoulou.

Before we continue the general considerations let us give some examples.

\begin{exm}
Suppose $A$ is a commutative  unital $*$-algebra and  $c=c^+$ is an element of $A$ which is not a zero divisor. Then 
$X_1 :=\{ \frac{a}{c}:a\in A\}$ and $X_2:=A+X_1$ are $*$-bimodules for $A$ with multiplication of $A$ as left and right actions. Note that $X_1$ does not contain $A$ if $c$ is not invertible in $A$.
\end{exm}
\begin{exm}\label{exminusx2}
Suppose $A$ is the $*$-algebra
$\dC[x_1,\dots,x_d]$  with involution determined by $(x_j)^+=x_j, j=1,\dots,d$. Let $\alpha>0$. Clearly, $X=A\cdot e^{-\alpha (x_1^2+\cdots+x_d^2)}$ is a $*$-bimodule for $A$, again with multiplication as actions.
\end{exm}

A large variety of examples are obtained from the following general setup.
\begin{exm}\label{generalex}
Suppose $A$ is a $*$-subalgebra of a larger $*$-algebra $B$ and $Y$ is a subset of the hermitian part of $B$. Then the linear span $X$ of products $ayb$ in $B$, where $a,b\in A$ and $y\in Y$,  is a $*$-bimodule for $A$, with left and right actions of $A$ given by the left and right multiplications of $B$. The $A$-bimodule axioms and the compatibility condition (\ref{starcon}) for the involution follow at once from the $*$-algebra axiom for the larger $*$-algebra $B$.
\end{exm}

We illustrate  Example \ref{generalex} with two applications.
\begin{exm}\label{pnweyl}
Let $B$ be the one-dimensional \emph{Weyl algebra}, that is, $B$ is the unital complex $*$-algebra with two hermitian generators $p,q$ and defining relation
$$
pq-qp=-\ii \cdot 1.
$$ 
Let $A$ be the $*$-subalgebra $\dC[q]$ of $B$ and $n\in \dN_0$. Then, by Example \ref{generalex}, the vector space $X$ of finite sums $\sum_j a_j(q)p^nb_j(q)$, where $a_j,b_j\in \dC[q]$, is a $*$-bimodule for $A$.
\end{exm}

\begin{exm}\label{bofh}
Suppose $B$ is the $*$-algebra of bounded operators on a Hilbert space $\cH$ and  $A$ is a $C^*$-algebra on $\cH$. Let $Y$ be  a set of bounded self-adjoint operators on $\cH$. Let $X$ be the vector space of finite sums $\sum_j a_jy_jb_j$, where $a_j,b_j\in A$ and $y_j\in T$. By  Example \ref{generalex}, $X$ is a $*$-bimodule of $X$.
\end{exm}

We continue with a few general definitions. The first one is more or less obvious.
\begin{dfn}
We say that a subset $Y$ of a $*$-bimodule $X$ for $A$ \emph{generates} $X$ if $X$ is the smallest $*$-bimodule for $A$ which contains $Y$.
\end{dfn}
This means that $X$ is the linear span of elements $a\cdot y\cdot b$ and $a\cdot y^+\cdot$, where $a,b\in A$ and $y\in Y$.
The element $p^n$ generates the $*$-bimodule $X$ in Example \ref{pnweyl} and the set $Y$ in Example \ref{bofh} generates the $*$-bimodule $X$ defined therein.

The well known notion of a quadratic module for $*$-algebras  extend in a natural manner to $*$-bimodules.
\begin{dfn}
A \emph{quadratic module} of a $*$-bimodule $X$ is a subset $Q$ of the hermitian part $X_\her$ such that $x+y\in Q$ and $\lambda x\in Q$ for $x,y\in Q$, $\lambda\geq 0,$ and
\begin{align}
a^+\cdot x\cdot a\in Q \quad {\rm for}~~~x\in Q,~a\in A.
\end{align}
\end{dfn}

Note that for  $a\in A$ and $x\in X_\her$, the element $a^+\cdot x\cdot a$ always belongs to $X_\her$.

The possibility of defining quadratic modules  in $*$-bimodules leads to several natural questions: Can one develop  ideas,  notions and results of real algebraic geometry in the setup of $*$-bimodules?  In particular, can one prove  Positivstellens\"atze for appropriate quadratic modules  in $*$-bimodules, at least in the case of a commutative real algebra $A$?

However, the attempt to define  preorderings for $*$-bimodules leads to difficulties. This is similar to the case of noncommutative real algebraic geometry where the difficulties arise from the fact that products of noncommuting hermitian elements are not hermitian (see, e.g., \cite[Section 4.2]{schm2009} for a discussion of this matter). For $*$-bimodules it is even worse: In general it is impossible to form products of elements of $X$.
\begin{exm}
Let $X$ be a $*$-bimodule for $A$ and let $Y$ be a subset of $X_\her$. Then
\begin{align*}
Q=\Big\{ \sum_{j=1}^k\, (a_j)^+\cdot y_j\cdot a_j:~ a_j\in A, y_j\in Y, k\in \dN\Big\}
\end{align*}
is a quadratic module of $X$,  the smallest quadratic module containing the set $Y.$
\end{exm} 
\begin{exm}
Suppose  $A=\dR[x_1,\dots,x_d]$. Let $p_1,\dots,f_k$ be nonzero polynomials of $A$. Then $X:=p_1^{-1}A+\dots+p_k^{-1}A$ is a $*$-bimodule for $A$. 

Let $P$ be a quadratic module of $A$. Then, obviously, $Q= p_1^{-1}P+\dots+p_k^{-1}P$ is a quadratic module of the $*$-bimodule $X$. For instance, if $P=\sum A^2$, then $Q$ is a set of sums of elements $p_j^{-1}p^2$ with $p\in A$ and $j=1,\dots,k$.
\end{exm}
The next definition gives     another important notion for $*$-bimodules.
\begin{dfn}\label{homomprphism}
Let $X_1$ and $X_2$ be $*$-bimodules for unital $*$-algebras $A_1$ and $A_2$, respectively. A\,  \emph{homomorphism}\, of $X_1$ in $X_2$ is a pair $(\rho,\vartheta)$ of a unital $*$-homomorphism $\rho:A_1\mapsto A_2$ and a linear mapping $\vartheta:X_1\mapsto X_2$  such that
\begin{align}
\theta(x^+)=\theta(x)^+ ~~~ {\rm and}~~~ \theta(a\cdot x\cdot b)=\rho(a)\cdot \theta(x)\cdot \rho(b) \quad {\rm for}~~~ a,b\in A_1,\, x\in X_1.
\end{align}
\end{dfn}
In particular, this definition applies to $*$-bimodules $X_1$ and $X_2$ for the same $*$-algebra $A_1=A_2$.
\section{Algebraic representations of $*$-bimodules}\label{algreal}

In this section we consider $*$-bimodules in a  purely algebraic setting.

Let $D$ be a  vector space over $\dK$.
We denote by $B(D,D)$ the $\dK$-vector space of all sesquilinear forms $x(\cdot, \cdot)$ on $D \times D$, that is $x$ is a mapping of $D \times D$ into $K$ which is linear in the first and conjugate linear in the second variables. In the real case $B(D,D)$ is just the real vector space of bilinear forms over $D$.

Clearly, $B(D,D)$ is a $*$-vector space with the involution defined by \begin{align}\label{defiaction}
x^+(\varphi,\psi):=\overline{x(\psi,\varphi)}, \quad\varphi,\psi\in D.
\end{align}

Suppose that $D$  is a {\bf left $A$-module}. We write simply $a\varphi$ for the left action of $a\in A$ on $\varphi\in D$.

For $a,b\in A$ and $x\in B(D,D)$ we define two sesquilinear forms $a\cdot x$ and $x\cdot b$ from $B(D,D)$ by
\begin{align}\label{defin}
(a\cdot x)(\varphi,\psi)=x(\varphi,a^+ \psi), \quad (x\cdot b)(\varphi,\psi)=x(b\varphi,\psi),\quad \varphi,\psi\in \cD.
\end{align}
\begin{prop}\label{algexa}
With these definitions (\ref{defiaction}) and (\ref{defin}), the $*$-vector space $B(D,D)$ is a $*$-bimodule for $A$.
\end{prop}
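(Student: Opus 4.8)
The plan is to verify each of the defining axioms of a $*$-bimodule for the triple consisting of the $*$-vector space $B(D,D)$, the left action $a\cdot x$, and the right action $x\cdot b$ given by (\ref{defin}). Since (\ref{defiaction}) visibly defines an involution on $B(D,D)$ (it is conjugate-linear and an involution because $\overline{\overline{x(\psi,\varphi)}}=x(\varphi,\psi)$), the work reduces to three checks: that $a\cdot x$ is a left action, that $x\cdot b$ is a right action, that the two actions commute, and finally the compatibility condition (\ref{starcon}) relating the actions to the two involutions.

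First I would check the right action. From $(x\cdot b)(\varphi,\psi)=x(b\varphi,\psi)$ we get $((x\cdot b)\cdot b')(\varphi,\psi)=(x\cdot b)(b'\varphi,\psi)=x(bb'\varphi,\psi)=(x\cdot(bb'))(\varphi,\psi)$, using that $D$ is a left $A$-module so $b(b'\varphi)=(bb')\varphi$; and $(x\cdot 1)(\varphi,\psi)=x(1\varphi,\psi)=x(\varphi,\psi)$. Hence $x\mapsto x\cdot b$ is a right action. Next, rather than verify the left action axioms directly, I would observe that the formula $(a\cdot x)(\varphi,\psi)=x(\varphi,a^+\psi)$ is exactly $(a\cdot x)=(x^+\cdot a^+)^+$: indeed $(x^+\cdot a^+)(\varphi,\psi)=x^+(a^+\varphi,\psi)=\overline{x(\psi,a^+\varphi)}$, so $(x^+\cdot a^+)^+(\varphi,\psi)=\overline{\overline{x(\varphi,a^+\psi)}}=x(\varphi,a^+\psi)$. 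Thus the left action here is precisely the one manufactured from the right action and the involution via (\ref{leright}), and the Lemma preceding the Definition of $*$-bimodule already guarantees it is a genuine left action.

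It then remains to check the commutativity of the two actions and the compatibility condition. For commutativity, $((a\cdot x)\cdot b)(\varphi,\psi)=(a\cdot x)(b\varphi,\psi)=x(b\varphi,a^+\psi)$ and $(a\cdot(x\cdot b))(\varphi,\psi)=(x\cdot b)(\varphi,a^+\psi)=x(b\varphi,a^+\psi)$, which agree, so (\ref{combimod}) holds and $B(D,D)$ is an $A$-bimodule. For (\ref{starcon}) I would compute $(a\cdot x\cdot b)(\varphi,\psi)=x(b\varphi,a^+\psi)$, so $(a\cdot x\cdot b)^+(\varphi,\psi)=\overline{x(b\psi,a^+\varphi)}=x^+(a^+\varphi,b\psi)=(x^+\cdot a^+)(a^+\varphi,\psi)$... more cleanly: $(b^+\cdot x^+\cdot a^+)(\varphi,\psi)=x^+(a^+\varphi,b\psi)=\overline{x(b\psi,a^+\varphi)}=(a\cdot x\cdot b)^+(\varphi,\psi)$, using $(b^+)^+=b$ and $(a^+)^+=a$. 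This establishes the proposition.

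I do not expect a genuine obstacle here; the only mild subtlety is bookkeeping of which variable the left versus right action touches and keeping the conjugation in (\ref{defiaction}) straight, since the left action on $x$ acts through $a^+$ in the second (conjugate-linear) slot while the right action acts through $b$ in the first slot. The cleanest exposition is to reduce the left-action axioms to the already-proven Lemma as above, so that only the right action, commutativity, and the single compatibility identity need to be written out explicitly.
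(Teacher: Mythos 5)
Your proof is correct and follows the same direct-verification strategy as the paper: the computations establishing the bimodule identity $(a\cdot x)\cdot b=a\cdot(x\cdot b)$ and the compatibility condition $(a\cdot x\cdot b)^+=b^+\cdot x^+\cdot a^+$ are essentially identical to the paper's. The one organizational difference lies in how the two module structures are handled. The paper verifies the left-action axioms $b\cdot(a\cdot x)=(ba)\cdot x$ and $1\cdot x=x$ by direct computation and disposes of the right action with a ``similarly''; you instead verify the right action directly and then observe that the left action of (\ref{defin}) is precisely the one manufactured from the right action and the involutions via (\ref{leright}), so the earlier Lemma yields the left-module axioms for free. That observation is sound (the paper itself records it just after the definition of a $*$-bimodule, though it does not exploit it inside this proof), and it buys a small economy by making explicit that only one of the two sets of module axioms ever needs to be checked by hand. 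The only blemish is the garbled intermediate expression $(x^+\cdot a^+)(a^+\varphi,\psi)$ in your first pass at the compatibility condition; you rightly discard it, and the ``more cleanly'' computation that replaces it is correct.
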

\begin{proof}
From (\ref{defin}) we derive 
$$
(b \cdot (a \cdot x))(\varphi,\psi)=(a\cdot x)(\varphi, b^+\psi)=x(\varphi, a^+b^+\psi)=x(\varphi,(ba)^+\psi)= ((ba)\cdot x)(\varphi,\psi),
$$
that is, $b \cdot (a \cdot x)=(ba)\cdot x$. Moreover, we get $(1\cdot x)(\varphi, \psi)=x(\varphi,1\cdot \psi)=x(\varphi,\psi)$, so that $1\cdot x=x$. Thus, $B(D,D)$ is a left $A$-nodule with action $(a,x)\mapsto a\cdot x$ defined by (\ref{defin}). Similarly, $B(D,D)$ is a right $A$-module with right action $(x,a)\mapsto x\cdot a$.

By
(\ref{defin}) we obtain 
$$
((a\cdot x) \cdot b)(\varphi,\psi)=x(b\varphi, a^+\psi) \quad {\rm and} \quad (a\cdot (x \cdot b))(\varphi,\psi)=x(b\varphi, a^+\psi),
$$
so that $(a\cdot x) \cdot b=a\cdot(x\cdot b)$. Hence $B(D,D)$ is an $A$-bimodule. 

Next we verify the $*$-condition (\ref{starcon}). Since $(a\cdot x \cdot b)(\psi,\varphi)=x(b\psi, a^+\varphi)$, we have
\begin{align}\label{starcon}
(a\cdot x \cdot b)^+(\varphi,\psi)=\overline{(a\cdot x \cdot b)(\psi,\varphi)}=\overline{ x(b\psi, a^+\varphi)}.
\end{align}
On the other hand, from $x^+(\xi,\eta)=\overline{x(\eta,\xi)}$ we get 
\begin{align}\label{starcon1}
(b^+\cdot x^+ \cdot a^+)(\varphi,\psi)=x^+(a^+\varphi,b\psi)=\overline{x(b\psi, a^+\varphi)}.
\end{align}
Since the expressions in (\ref{starcon}) and (\ref{starcon1}) coincide,  we have $(a\cdot x \cdot b)^+=b^+\cdot x^+ \cdot a^+$. Thus we have shown that $B(D,D)$ is a $*$-bimodule. \end{proof}
An immediate consequence of Proposition \ref{algexa} is the following.

\begin{cor}\label{substar}
Suppose $X$ is a $*$-invariant linear subspace of $B(D,D)$ such that $a\cdot x\cdot b\in X$ for  $x\in X$ and $a,b\in A$.
Then $X$ is a $*$-bimodule for $A$ with definitions (\ref{defiaction}) and (\ref{defin}).
\end{cor}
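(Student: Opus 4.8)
The plan is to deduce Corollary \ref{substar} directly from Proposition \ref{algexa} by observing that all the defining structure of a $*$-bimodule restricts to $X$. Since $B(D,D)$ is a $*$-bimodule for $A$ by Proposition \ref{algexa}, it suffices to check that the hypotheses on $X$ guarantee that the three operations involved---the involution (\ref{defiaction}), the left action, and the right action (\ref{defin})---map $X$ into itself, so that $X$ inherits all the identities (bilinearity, module axioms, the commutation (\ref{combimod}), and the compatibility (\ref{starcon})) by mere restriction.

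Concretely, I would argue as follows. First, $X$ is a linear subspace of $B(D,D)$, hence a $\dK$-vector space in its own right. Second, $X$ is assumed $*$-invariant, i.e.\ $x^+\in X$ whenever $x\in X$; since the involution on $B(D,D)$ already satisfies the $*$-vector space axioms, its restriction makes $X$ a $*$-vector space. Third, the hypothesis $a\cdot x\cdot b\in X$ for all $x\in X$, $a,b\in A$ gives in particular (taking $b=1$) that $a\cdot x\in X$ and (taking $a=1$) that $x\cdot b\in X$, so the left and right actions (\ref{defin}) of $A$ on $B(D,D)$ restrict to actions on $X$. The module axioms $b\cdot(a\cdot x)=(ba)\cdot x$, $1\cdot x=x$, $x\cdot(ab)=(x\cdot a)\cdot b$, $x\cdot 1=x$, and the bimodule commutation $(a\cdot x)\cdot b=a\cdot(x\cdot b)$ all hold for elements of $X$ because they hold in $B(D,D)$ and $X$ is closed under the operations involved; likewise the compatibility condition $(a\cdot x\cdot b)^+=b^+\cdot x^+\cdot a^+$ holds in $X$ since both sides lie in $X\subseteq B(D,D)$ and the identity is valid in $B(D,D)$. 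Therefore $X$ with the restricted operations is a $*$-bimodule for $A$.

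There is really no obstacle here: the corollary is a soft statement, and the only thing to be careful about is spelling out why ``$a\cdot x\cdot b\in X$ for all $a,b\in A$'' already yields closure under the one-sided actions separately, which follows by inserting the unit $1$. Everything else is inherited verbatim from Proposition \ref{algexa}, so the proof is essentially a two-line observation about substructures and could even be left to the reader.
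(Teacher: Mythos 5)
Your argument is correct and is exactly the route the paper takes: the paper presents Corollary \ref{substar} as an immediate consequence of Proposition \ref{algexa}, with the operations and identities simply restricting to the $*$-invariant, action-stable subspace $X$. Your one substantive observation---that closure under $a\cdot x\cdot b$ yields closure under the one-sided actions by setting $a=1$ or $b=1$ (using $1\cdot x=x$ and $x\cdot 1=x$)---is the right thing to make explicit.
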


The $*$-bimodules of sesquilinear forms described in Corollary \ref{substar} will be our guiding examples  algebraic model for $*$-bimodules.

\begin{exm}\label{singegen}
Let $x$ be a fixed sesquilinear form of $B(D,D)$ and let  $X_x$ be the linear span of elements $a\cdot x\cdot b$, where $a,b\in A$. Clearly, $X_x$ is a subbimodule of the $A$-bimodule $B(D,D)$. 
If there exist elements $c,d\in A$ such that $x^+=c \cdot x\cdot d$, then $X_x$ is $*$-invariant, so $X_x$ is the  $*$-bimodule for $A$ by Corollary \ref{substar}. In fact, $X_x$ is the $*$-bimodule generated by the form $x$. All this holds, for instance, if $x=x^+$.
\end{exm}

The $A$-bimodule operations on $B(D,D)$ depend essentially on the action of $A$ on $D$. In order to emphasize this action let us denote the left module action of $A$ on $D$ by $\sigma$ (that is, $\sigma(a)\varphi=a\cdot \varphi$). Then, by Proposition  \ref{algexa}, $B(D,D)$ becomes a $*$-bimodule for $A$ with left and right actions  $\cdot_\sigma$ defined by
\smallskip
\begin{align}\label{defin1}
(a\cdot_\sigma x)(\varphi,\psi)=x(\varphi,\sigma(a^+) \psi), \quad (x\cdot_\sigma b)(\varphi,\psi)=x(\sigma(b)\varphi,\psi),\quad \varphi,\psi\in \cD.
\end{align}

Now we are ready for another basic concept on $*$-bimodules.
\begin{dfn}\label{algrep}
Suppose  $D$ is a left $A_1$-module for a unital $*$-algebra $A_1$ and $B(D,D)$ is the corresponding  $*$-bimodule for $A_1$ with left and right actions  (\ref{defin}).

 An \emph{algebraic representation} of  a $*$-bimodule $X$ for the $*$-algebra $A$ on $D$ is a homomorphism $(\theta,\rho)$ of the $*$-bimodule $X$ for $A$ into the $*$-bimodule $B(D,D)$ for $A_1$ according to Definition \ref{homomprphism}, that is, $\rho:A\mapsto A_1$ is a unital $*$-homomorphism of $*$-algebras and $\theta:X\mapsto B(D,D)$ is a linear mapping  such that 
\begin{align}
\theta(x^+)(\varphi,\psi)=\overline{\theta(x)(\psi,\varphi)}\quad {\rm and}\quad
\theta (a\cdot x\cdot b)(\varphi, \psi)= \theta(x)(\rho(b)\cdot\varphi, \rho(a^+)\cdot\psi).
\end{align}for $a,b\in A, x\in X,$ and $\varphi, \psi\in D$.
\end{dfn}
\begin{rem}
Let $D$  be a left  module for  $A$ with two left  actions denoted by $\sigma_1$ and $\sigma_2$, respectively. Then $B(D,D)$  becomes an $A$-bimodule with definitions
\begin{align}\label{defin1}
(a\cdot x)(\varphi,\psi)=x(\varphi,\sigma_2(a^+) \psi), \quad (x\cdot b)(\varphi,\psi)=x(\sigma_1(b)\varphi,\psi),\quad \varphi,\psi\in \cD.
\end{align}
In order to prove that $B(D,D)$ is a $*$-bimodule we need that $\sigma_1=\sigma_2$. 
\end{rem}
\section{Hilbert space representations of $*$-bimodules}\label{Hilbrep}

In this section we pass from the algebraic considerations of the preceding section to the Hilbert space setting. For this we suppose that $A$ is a  {\it complex} unital $*$-algebra and $D$ is a {\it complex} inner product space $(\cD,\langle \cdot,\cdot\rangle)$. Let $\cH$ be its Hilbert space completion. Before we define $*$-representations of abstract $*$-bimodules we study $*$-bimodules of operators.

Let  $L(\cD,\cH)$ be the vector space of all linear operators $x:\cD\mapsto \cH$.
Let $\cL^+(\cD,\cH)$ denote the set of operators $a\in L(\cD,\cH)$  such that $\cD(a^*)\supseteq \cD$. Set $a^+:=a^*\lceil \cD$. It is not difficult to verify that  $\cL^+(\cD,\cH)$ is  a $*$-vector space with involution $a\mapsto a^+$. As usual in unbounded operator algebras, $\cL^+(\cD)$ denotes the set of operators $a\in \cL^+(\cD,\cH)$ such that $a$ and $a^*$ map the space $\cD$ into itself. Then $\cL^+(\cD)$ is a unital $*$-algebra with operator product as multiplication and involution $a\mapsto a^+$, see, for instance, \cite{schm1990}. 

Suppose that $t\in L(\cD,\cH)$. We define a sesquilinear form $x_t$ of $B(\cD,\cD)$ by 
\begin{align}\label{xt}
x_t(\varphi,\psi):=\langle t\varphi, \psi\rangle ,\quad \varphi,\psi\in \cD.
\end{align}
Further, if $t \in\cL^+(\cD,\cH)$, then we have $(x_t)^+=x_{t^+}$, that is, the operator $t^+$ corresponds to the adjoint form $(x_t)^+\in B(\cD,\cD)$ of $x_t$. The sesquilinear forms $x_t$ with $t \in\cL^+(\cD,\cH)$ form a $*$-vector space.

Now we suppose  $A$ is a {\it unital $O^*$-algebra} $\cA$ on $\cD$. This means that  $\cA$ is a $*$-subalgebra of the $*$-algebra $\cL^+(\cD)$ which contain the identity map $I$. The operators $a\in \cA$ act on vectors $\varphi\in \cD$ and the map $(a,\varphi)\mapsto  a\varphi$ is  a left action of the unital $*$-algebra $\cA$ on the vector space $\cD$. Hence we are in the setup of  the preceding section. 
 Let $\cB_\cA^+(\cD,\cD)$ denote the linear span of forms $a\cdot x_t\cdot b$, where  $a,b\in \cA$ and $t\in \cL^+(\cD,\cH)$. In the special case $\cA=\cL^+(\cD)$ we simply write $\cB^+(\cD,\cD)$ instead of $\cB_\cA^+(\cD,\cD)$. Clearly, $\cB^+_\cA(\cD,\cD)$ is $*$-invariant. Therefore, by Corollary \ref{substar} and equations (\ref{defin}), {\it $\cB^+_\cA(\cD,\cD)$ is a  $*$-bimodule for $\cA$ with actions given by }
\begin{align}\label{ostar} 
(a\cdot x_t\cdot b )(\varphi,\psi):=\langle tb\varphi,a^+ \psi\rangle, \quad a,b\in \cA,\, t\in \cL^+(\cD,\cH),\,\varphi,\psi\in \cD .
\end{align}

If the operator $t$ maps $\cD$ into itself, we have $\langle tb\varphi,a^+ \psi\rangle =\langle atb\varphi, \psi\rangle $, so $a\cdot x_t\cdot b$ is the sesquilinear form\, $x_{atb}$\, which is associated by (\ref{xt}) with the operator product $atb\in L(\cD,\cH).$ But in general,  the operator $t$ does not map $\cD$ into itself and the sesquilinear form $a\cdot x_t\cdot b$ cannot be given by an operator of $L(\cD,\cH)$.

Setting $t=I$ and $t=a\in \cA$, we obtain  $x_I(\varphi,\psi)=\langle \varphi, \psi\rangle$ and $x_a(\varphi,\psi)=\langle a\varphi, \psi\rangle$ from (\ref{xt}). Let us identify each operator $a\in \cA$ with the associated sesquilinear form $x_a(\varphi,\psi)=\langle a\varphi,\psi\rangle$  of\, $\cB^+_\cA(\cD,\cD)$. Then
 $x_{a^+}=(x_a)$ by definition and
$$(a\cdot x_I\cdot  b )(\varphi,\psi)=\langle I\ b\varphi,a^+ \psi\rangle =\langle ab\varphi, \psi\rangle =x_{ab}(\varphi,\psi\rangle.$$
by (\ref{ostar}). Therefore, under the identification of $a\in \cA$ with $x_a\in \cB^+_\cA(\cD,\cD)$, the involution and the multiplication of the $*$-algebra $\cA$ correspond to the involution and the actions of the $*$-bimodules $\cB^+_\cA(\cD,\cD)$. Summarizing, this  shows  that the {\it pair $(\cA,\cB^+_\cA(\cD,\cD))$ is a quasi $*$-algebra according to Definition \ref{quasidef}.}

Before we continue let us note that the weak commutant of the $O^*$-algebra $\cA$ can be nicely illustrated in this manner, as the following example shows.
\begin{exm}
Let $T$ be a bounded operator on $\cH$ and let $t:=T\lceil \cD$. Then  
\begin{align*}
(x_t\cdot a)( \varphi,\psi)=\langle ta \varphi,\psi\rangle=\langle Ta \varphi,\psi\rangle~~~ {\rm and} ~~~(a\cdot x_t)(\varphi,\psi)=\langle. t\varphi,a^+\psi\rangle =\langle T\varphi,a^+\psi\rangle ,
\end{align*}
Hence it follows that $x_t\cdot a=a\cdot x_t$ for all $a\in \cA$ if and only if the operator $T$ belongs to the weak commutant $\cA_w^\prime$.
\end{exm}

Another main notion of this paper is introduced in following definition. 
\begin{dfn}\label{rep}
Let $X$ be a $*$-bimodule for $A$.
Suppose $\rho$ is a nondegenerate $*$-representation of $A$ with domain $\cD$ and $\theta$ is linear map of $X$ into $\cL^+(\cD,\cH)$ which preserves the involution (that is, $\theta(x^+)=\theta(x)^+$ for $x\in X$). The pair $(\theta,\rho)$ is called a
 \emph{$*$-representation}  of  the $*$-bimodule $X$ on $\cD$ if 
\begin{align}\label{repcond}
\langle\theta (a\cdot x\cdot b)\varphi,\psi\rangle=\langle\theta(x)\rho(b)\varphi, \rho(a^+)\psi\rangle
\quad {\rm for}~~~a,b\in A, x\in X,\,  \varphi, \psi\in \cD.
\end{align}
 
 A $*$-representation $(\theta,\rho)$   of  $X$ is called \emph{faithful} if  $\rho$ and $\theta$ are injective, that is, $\rho(a)=0$ for $a\in A$ implies $a=0$ and $\theta(x)=0$ for $x\in X$ implies $x=0$.
\end{dfn}

First we   relate the $*$-representations of Definition \ref{rep} to the algebraic representations from Definition \ref{algrep}.
 
As in Definition \ref{rep}, let $\rho$ be a nondegenerate $*$-representation of $A$ on $\cD$ and $\theta$ a linear map of $X$ into $\cL^+(\cD,\cH)$.
Further, let $\cB^+_\cA(\cD,\cD)$ be the $*$-bimodule for the $O^*$-algebra $\cA:=\rho(A)$ defined above, with actions given by (\ref{ostar}).  
Then the pair $(\theta,\rho)$ is a $*$-representation  of  $X$ according to Definition \ref{rep} if and only if $(\theta,\rho)$ is a  homomorphism of the $*$-bimodule $X$ into the $*$-bimodule $\cB^+_\cA(\cD,\cD)$  in the sense of Definition \ref{homomprphism}, or equivalently, $(\theta,\rho)$ is an algebraic representation of the $*$-bimodule $X$ on the vector space $\cD$ according to Definition \ref{algrep}.

The following  is the notion of a {\it $qu*$-representation} of a quasi $*$-algebra, as defined in \cite[Definition 1.2.5]{tfr}.
\begin{dfn}
A \emph{$*$-representation} of  a quasi $*$-algebra  $(A,X)$ on a complex inner product space $\cD$ is a linear mapping $\theta$ of $X$ into $\cL^+(\cD,\cH)$ such that $\rho:=\theta\lceil A$ is a $*$-representation of the $*$-algebra $A$ such that $\rho(1)=I$ and 
\begin{align}\label{repqu}
\theta(x^+)=\theta(x)^+ \quad {\rm and}\quad\theta(x\cdot a)=\theta(x)\rho(a)\quad {\rm for}~~~ a\in A, x\in X.
\end{align}
\end{dfn}
The next proposition shows that this notion is a $*$-representation of the $*$-bimodule $X$ according to Definition \ref{rep}.
\begin{prop}
Suppose $\theta$ is a  $*$-representation of  a quasi $*$-algebra $(A,X)$ and let $\rho:=\theta\lceil A$. Then  the pair $(\theta,\rho)$ is a $*$-representation of the $*$-bimodule $X$ according to Definition \ref{rep}.
\end{prop}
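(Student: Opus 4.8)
The plan is to verify condition (\ref{repcond}) of Definition \ref{rep} directly, using the quasi $*$-algebra structure of $(A,X)$ together with the two properties (\ref{repqu}) of a $qu*$-representation, namely $\theta(x^+)=\theta(x)^+$ and $\theta(x\cdot a)=\theta(x)\rho(a)$ for $a\in A$, $x\in X$. Since $\rho=\theta\lceil A$ is by hypothesis a $*$-representation of $A$ with $\rho(1)=I$, it is nondegenerate in the sense recalled in the introduction, and $\theta$ already maps $X$ into $\cL^+(\cD,\cH)$ and preserves the involution; so the only thing that needs checking is the compatibility identity (\ref{repcond}) relating $\theta(a\cdot x\cdot b)$ to $\theta(x)\rho(b)$ and $\rho(a^+)$.

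The key step is to express the left action on $X$ through the right action and the involution. Recall that for any $*$-bimodule the left action is recovered from the right action via formula (\ref{leright}): $a\cdot x=(x^+\cdot a^+)^+$. Hence for $a,b\in A$ and $x\in X$ I would compute
\begin{align*}
\theta(a\cdot x\cdot b)&=\theta\bigl((x^+\cdot a^+)^+\cdot b\bigr)=\theta\bigl((x^+\cdot a^+)^+\bigr)\rho(b)=\theta(x^+\cdot a^+)^+\,\rho(b)\\
&=\bigl(\theta(x^+)\rho(a^+)\bigr)^+\rho(b)=\rho(a^+)^+\,\theta(x^+)^+\,\rho(b)=\rho(a)\,\theta(x)\,\rho(b),
\end{align*}
where I used (\ref{repqu}) twice (once for the right action $(x^+\cdot a^+)^+\cdot b$ and once for $x^+\cdot a^+$), the fact that $\theta$ preserves the involution, that $\rho$ is a $*$-homomorphism so $\rho(a^+)=\rho(a)^+$, and the anti-multiplicativity $(ST)^+\subseteq T^+S^+$ for operators in $\cL^+(\cD,\cH)$ restricted to $\cD$. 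One then takes the inner product against $\varphi,\psi\in\cD$:
\begin{align*}
\langle\theta(a\cdot x\cdot b)\varphi,\psi\rangle=\langle\rho(a)\theta(x)\rho(b)\varphi,\psi\rangle=\langle\theta(x)\rho(b)\varphi,\rho(a)^+\psi\rangle=\langle\theta(x)\rho(b)\varphi,\rho(a^+)\psi\rangle,
\end{align*}
which is exactly (\ref{repcond}).

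The main subtlety, rather than an obstacle, is bookkeeping with the domains and adjoints of unbounded operators: the identity $\theta(x^+\cdot a^+)^+=\rho(a^+)^+\theta(x^+)^+$ must be read as an equality of operators on $\cD$, and one should check that all the operators involved indeed lie in $\cL^+(\cD,\cH)$ so that the formal manipulations with $+$ are legitimate — but this is guaranteed because $\theta$ takes values in $\cL^+(\cD,\cH)$, $\rho$ takes values in the $O^*$-algebra $\rho(A)\subseteq\cL^+(\cD)$, and $X$ is a $*$-bimodule so all the products $a\cdot x\cdot b$, $x^+\cdot a^+$, etc.\ make sense as elements of $X$. A clean alternative, if one prefers to avoid passing through (\ref{leright}), is to invoke the already-established fact that $(\cA,\cB^+_\cA(\cD,\cD))$ with $\cA=\rho(A)$ is itself a quasi $*$-algebra and that a $qu*$-representation is nothing but a homomorphism into it; but the direct computation above is shorter.
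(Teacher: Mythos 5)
Your overall strategy---reduce everything to the identities $\theta(x^+)=\theta(x)^+$, $\theta(x\cdot a)=\theta(x)\rho(a)$ and the bimodule relation $(a\cdot x)^+=x^+\cdot a^+$---is exactly the paper's, but your execution at the level of operator products contains a genuine gap. The step
\begin{align*}
\bigl(\theta(x^+)\rho(a^+)\bigr)^+ \;=\; \rho(a^+)^+\,\theta(x^+)^+ \;=\; \rho(a)\,\theta(x)
\end{align*}
is not legitimate: $\theta(x)$ is only an element of $\cL^+(\cD,\cH)$, so $\theta(x)\varphi$ is a vector of $\cH$ that need not lie in $\cD$, and $\rho(a)$ is defined only on $\cD$. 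Hence the composition $\rho(a)\,\theta(x)\,\rho(b)$ is in general not a well-defined operator on $\cD$, and the same defect infects your final line $\langle\rho(a)\theta(x)\rho(b)\varphi,\psi\rangle=\langle\theta(x)\rho(b)\varphi,\rho(a)^+\psi\rangle$. You flag this as ``bookkeeping'' and assert it is ``guaranteed'' by $\theta(X)\subseteq\cL^+(\cD,\cH)$ and $\rho(A)\subseteq\cL^+(\cD)$, but that is precisely what fails: membership in $\cL^+(\cD,\cH)$ does not give invariance of $\cD$. The paper's Proposition on the adjoint representation makes the correct statement, namely $\theta(a\cdot x\cdot b)=\rho^*(a)\theta(x)\rho(b)$ with the \emph{adjoint} representation $\rho^*$ on its larger domain; the naive identity with $\rho(a)$ holds only for \emph{strong} representations.

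The fix is to never form the product $\rho(a)\theta(x)$: work throughout with inner products and move one operator at a time onto the other side, each time applying an adjoint only to a vector that is known to lie in $\cD$. Concretely, for $\varphi,\psi\in\cD$,
\begin{align*}
\langle\theta(a\cdot x\cdot b)\varphi,\psi\rangle
&=\langle\theta(a\cdot x)\rho(b)\varphi,\psi\rangle
=\langle\rho(b)\varphi,\theta(a\cdot x)^+\psi\rangle
=\langle\rho(b)\varphi,\theta(x^+\cdot a^+)\psi\rangle\\
&=\langle\rho(b)\varphi,\theta(x^+)\rho(a^+)\psi\rangle
=\langle\theta(x)\rho(b)\varphi,\rho(a^+)\psi\rangle,
\end{align*}
where every adjoint is taken against a vector of $\cD$ (namely $\rho(b)\varphi$ or $\rho(a^+)\psi$). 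This is the paper's proof, and it uses exactly the ingredients you identified; only the order of the manipulations is changed so that no undefined composition ever appears.
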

\begin{proof} Let $a,b\in A, x\in X$ and $\varphi,\psi\in \cD$. Using (\ref{repqu}) we derive
\begin{align*}
&\langle \theta (a\cdot x\cdot b)\varphi,\psi\rangle=\langle \theta((a\cdot x)\cdot b)\varphi,\psi\rangle\\&=\langle \theta(a\cdot x)\rho( b)\varphi,\psi\rangle
=\langle \rho(b)\varphi,\theta(a\cdot x)^+\psi \rangle \\& =\langle \rho(b)\varphi,\theta((a\cdot x)^+)\psi \rangle=\langle \rho(b)\varphi,\theta(x^+\cdot a^+)\psi \rangle\\&=\langle \rho(b)\varphi,\theta(x^+)\rho(a^+)\psi\rangle =\langle \theta(x) \rho(b)\varphi,\rho(a^+)\psi\rangle,
\end{align*}
which proves  condition (\ref{repcond}).
\end{proof}
\begin{prop}\label{adjointrep}
If $(\theta,\rho)$ is a $*$-representation of a $*$-bimodule $X$, then we have $\rho(a)\cD\subseteq  \cD(\rho^*)$ and\,  $\theta (a\cdot x\cdot b)= \rho^*(a)\theta(x)\rho(b)$\, for $a,b\in A$ and $x\in X$.
\end{prop}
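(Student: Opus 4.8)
The plan is to unwind the defining condition (\ref{repcond}) of a $*$-representation and identify the sesquilinear forms involved with operators. First I would fix $a,b\in A$, $x\in X$, $\varphi,\psi\in\cD$ and start from (\ref{repcond}), which reads $\langle\theta(a\cdot x\cdot b)\varphi,\psi\rangle=\langle\theta(x)\rho(b)\varphi,\rho(a^+)\psi\rangle$. The right-hand side is $\langle\rho(a)\theta(x)\rho(b)\varphi,\psi\rangle$ \emph{provided} the pairing $\langle\theta(x)\rho(b)\varphi,\rho(a^+)\psi\rangle$ can be rewritten by moving $\rho(a^+)$ across the inner product. Since $\rho$ is a $*$-representation, for $\eta\in\cD$ we have $\langle\eta,\rho(a^+)\psi\rangle=\langle\rho(a)\eta,\psi\rangle$ by the very definition of $*$-representation recalled in the introduction; applying this with $\eta=\theta(x)\rho(b)\varphi\in\cD$ (here one uses that $\theta(x)\in\cL^+(\cD,\cH)$ maps $\cD$ into $\cH$, but $\rho(a)$ only acts on $\cD$, so one must be slightly careful — see below) gives the claim in a weak/formal sense. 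So the two statements to prove are: (i) $\rho(a)\cD\subseteq\cD(\rho^*)$, and (ii) $\theta(a\cdot x\cdot b)=\rho^*(a)\theta(x)\rho(b)$ as operators $\cD\to\cH$.

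For (i): since $\rho$ is a $*$-representation of $A$ on $\cD$, for every $a\in A$ the operator $\rho(a^+)$ is an extension-restriction adjoint of $\rho(a)$ on $\cD$, i.e. $\langle\rho(a)\xi,\eta\rangle=\langle\xi,\rho(a^+)\eta\rangle$ for all $\xi,\eta\in\cD$. This means $\cD\subseteq\cD(\rho(a)^*)$ and $\rho(a)^*\lceil\cD=\rho(a^+)$. Now $\rho^*$ denotes the adjoint representation: its domain is $\cD(\rho^*)=\bigcap_{a\in A}\cD(\rho(a)^*)$ (see \cite{schm1990}), equivalently the set of $\eta$ for which all $\rho(a)^*\eta$ stay coherent; but more to the point, $\rho(a)\cD\subseteq\cD(\rho^*)$ because for $\varphi\in\cD$ and any $b\in A$, $\langle\rho(b)\rho(a)\varphi,\zeta\rangle=\langle\rho(ba)\varphi,\zeta\rangle=\langle\varphi,\rho(a^+b^+)\zeta\rangle=\langle\rho(a)\varphi,\rho(b^+)\zeta\rangle$ for $\zeta\in\cD$, which shows $\rho(a)\varphi\in\cD(\rho(b)^*)$ with $\rho(b)^*\rho(a)\varphi=\rho(a)\rho(b^+ \zeta)$-type relation — more cleanly, $\rho(a)\varphi\in\cD(\rho^*)$ since $\cD$ is $\rho$-invariant and $\rho(ba)\varphi$ computes the action of $\rho^*(b)$ on $\rho(a)\varphi$. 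I would phrase this as: $\cD(\rho^*)$ is $\rho(A)$-invariant and contains $\cD$, hence contains $\rho(a)\cD$. This is essentially standard from the theory of $*$-representations of $*$-algebras and I would just cite \cite{schm1990}.

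For (ii): fix $a,b\in A$, $x\in X$, $\varphi\in\cD$. The element $\theta(a\cdot x\cdot b)\varphi$ lies in $\cH$, and for all $\psi\in\cD$, (\ref{repcond}) gives $\langle\theta(a\cdot x\cdot b)\varphi,\psi\rangle=\langle\theta(x)\rho(b)\varphi,\rho(a^+)\psi\rangle$. On the other hand, $\rho(b)\varphi\in\cD$, so $\theta(x)\rho(b)\varphi\in\cH$; but we also want to apply $\rho^*(a)$ to it, so we must know $\theta(x)\rho(b)\varphi\in\cD(\rho^*)$. This is where a hypothesis is needed: actually $\theta(x)\in\cL^+(\cD,\cH)$ only guarantees $\theta(x)\rho(b)\varphi\in\cH$, not membership in $\cD$ or $\cD(\rho^*)$. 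However, observe that for $\psi\in\cD$ we have $\langle\theta(x)\rho(b)\varphi,\rho(a^+)\psi\rangle=\langle\theta(x^+\cdot\text{(something)})\dots\rangle$ — better: use that $\theta(x)^+=\theta(x^+)\in\cL^+(\cD,\cH)$ maps $\cD$ into $\cH$, hence $\langle\theta(x)\rho(b)\varphi,\rho(a^+)\psi\rangle=\langle\rho(b)\varphi,\theta(x)^+\rho(a^+)\psi\rangle$ is well-defined, and then $=\langle\rho(b)\varphi,\theta(x^+)\rho(a^+)\psi\rangle$. This shows $\theta(a\cdot x\cdot b)\varphi\in\cD(\theta(x^+)^* \circ \dots)$; I would instead directly argue: the functional $\psi\mapsto\langle\theta(x)\rho(b)\varphi,\rho(a^+)\psi\rangle$ on $\cD$ equals $\psi\mapsto\langle\rho(a)\theta(x)\rho(b)\varphi,\psi\rangle$ once we verify $\theta(x)\rho(b)\varphi\in\cD(\rho(a)^*)$ for every $a$ — but this follows because the left side $\langle\theta(a\cdot x\cdot b)\varphi,\psi\rangle$ is manifestly continuous and represented by the vector $\theta(a\cdot x\cdot b)\varphi\in\cH$, so by definition of the adjoint, $\theta(x)\rho(b)\varphi\in\cD(\rho(a)^*)$ and $\rho(a)^*\theta(x)\rho(b)\varphi=\theta(a\cdot x\cdot b)\varphi$. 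Taking $a=1$ here first shows nothing new; taking general $a$, and letting $a$ range over $A$, shows $\theta(x)\rho(b)\varphi\in\bigcap_a\cD(\rho(a)^*)=\cD(\rho^*)$ and $\rho^*(a)\theta(x)\rho(b)\varphi=\theta(a\cdot x\cdot b)\varphi$. Since $\varphi\in\cD$ was arbitrary, this is exactly (ii).

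\textbf{Main obstacle.} The delicate point is the domain bookkeeping: asserting $\theta(x)\rho(b)\varphi\in\cD(\rho^*)$ is not automatic from $\theta(x)\in\cL^+(\cD,\cH)$, and must be extracted precisely from (\ref{repcond}) via the characterization of the adjoint operator ($\xi\in\cD(T^*)$ iff $\psi\mapsto\langle T\psi,\xi\rangle$ extends continuously, iff there is $\zeta$ with $\langle T\psi,\xi\rangle=\langle\psi,\zeta\rangle$ for all $\psi\in\cD$). Once that identification is made carefully — reading (\ref{repcond}) as the statement that $\langle\rho(a)\psi,\ \theta(x)\rho(b)\varphi\rangle$ (wait, order matters) is represented by $\theta(a^+\cdot x\cdot b)\varphi$ or similar after a conjugation — the rest is the routine manipulation above. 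Everything else (invariance of $\cD(\rho^*)$, the basic adjoint relations for $*$-representations) I would take from \cite{schm1990} without proof.
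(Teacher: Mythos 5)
Your argument is correct and is essentially the paper's proof: the substantive step in both is to read (\ref{repcond}) as saying that the functional $\psi\mapsto\langle\theta(x)\rho(b)\varphi,\rho(a^+)\psi\rangle$ is represented by the vector $\theta(a\cdot x\cdot b)\varphi\in\cH$, so that $\theta(x)\rho(b)\varphi\in\cD(\rho(a^+)^*)$ for every $a$, hence lies in $\cD(\rho^*)=\bigcap_a\cD(\rho(a)^*)$, and then $\rho^*(a)=\rho(a^+)^*\lceil\cD(\rho^*)$ gives the operator identity. Your separate part (i) proves the inclusion exactly as literally stated, which is trivially true; note, however, that the paper's own proof actually establishes $\theta(x)\rho(b)\cD\subseteq\cD(\rho^*)$ (the inclusion one really needs for the formula to make sense), so the displayed $\rho(a)\cD\subseteq\cD(\rho^*)$ is most likely a typo, and the fact you need is precisely the one you extract in part (ii).
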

\begin{proof}
From equation (\ref{repcond}) it follows that the vector $\theta (x)\rho(b)\varphi$ belongs to the domain of the adjoint operator $\rho(a^+)^*$ for $a\in A$. The intersection of these domains for all $a\in A$ is the domain $\cD(\rho^*)$ of the adjoint representation. Therefore, we obtain  $\rho^*(a)=\rho(a^+)^*\lceil \cD(\rho^*)$. Hence the right-hand side of (\ref{repcond}) yields the equality $\theta (a\cdot x\cdot b)= \rho^*(a)\theta(x)\rho(b)$.
\end{proof}
\begin{dfn}\label{repstrong}
A $*$-representation $(\theta,\rho)$ of a $*$-bimodule $X$ is called \emph{strong} if $\theta(x)\in \cL^+(\cD)$ for all $x\in X$.
\end{dfn}
From Proposition \ref{adjointrep} it follows that if $\rho$ is self-adjoint, then $(\theta,\rho)$ is strong,  and if $(\theta,\rho)$ is strong, then we have
\begin{align*}
\theta (a\cdot x\cdot b)=\rho(a)\theta(x)\rho(b)\quad {\rm for}~~~a,b\in A, x\in X.
\end{align*}

Recall from Example \ref{generalex} that many $*$-bimodules are obtained from the following  situation: There is a larger $*$-algebra $B$ which contains $A$ as a $*$-subalgebra and $X$ as a $*$-subspace,  the actions of $A$ on $X$ are given by the multiplication of $B$, and the involutions of  $A$ and $X$ are the involutions in $B$. In this special case, it is clear that the restrictions of each $*$-representation of $B$ to $X$ and $A$ yield  a {\it strong $*$-representation} of the $*$-bimodule $X$. 
This emphasizes the importance the notion  of a strong $*$-representation.
\begin{exm}\label{weyl}
Let $B$ denote the one-dimensional Weyl algebra, that is, $B$ is the unital  $*$-algebra with hermitean generators $p,q$ and defining relation $pq-qp=-{\rm i}$. Let $A=\dC[q]$. Let $X_2$ and $X_1$ denote  the spans of elements $fp^2g$ and $fpg$, respectively,   with $f,g\in A$. Obviously, $X_2$ and $X_1$ are  $*$-bimodules for $A$, with algebraic operations inherited from the Weyl algebra $B$. For  simplicity we set $d={\rm i}p$. 
Then $X_2=\Lin \{fd^2g: f,g\in A\}$ and $X_1=\Lin \{fdg: f,g\in A\}$. 

First we show that there is a $*$-homomorphism $\vartheta$ of the $*$-bimodule $X_2$ on the $*$-bimodule $X_1$ such that $\vartheta (fd^2q)=-f dg$ for $f,g\in A$. The crucial step 
 is to show that the linear map $\vartheta$ is well-defined.  Each element  of $X_2$ is a finite sum $x=\sum_j f_jd^2g_j$ with $f_j,g_j\in A$. Suppose that $x= \sum_j f_jd^2g_j=0$ in $X_2$. Then, since $d^2g_j=g_jd^2+ 2g_j^\prime d+g_j^{\prime\prime}$, it follows that $\sum_f f_jg_j=\sum_j f_jg_j^\prime=\sum_j f_jg_j^{\prime\prime}=0$ in the  algebra $A=\dC[q]$.
Then $\sum_j f_jdg_j=\sum_j f_jg_jd + \sum_j f_jg_j^\prime=0$, so $\theta$ is well-defined. Since $d^+=-d$, it is clear that $\vartheta$ is a $*$-homomorphism of $*$-bimodules.

Note that $\vartheta$ is not injective. Indeed, $x=q^2d^2-2qd^2q+d^2q^2=4(q-q^2)d+2q\neq 0$, but  $\vartheta (x)=-q^2d+2qdq+dq^2=0.$

Let $\pi$ be the Schr\"odinger representation of the Weyl algebra $B$. It acts on $\cD(\pi)=\cS(\dR)$ in the Hilbert space $L^2(\dR)$ by $\pi(q)=t$ and $\pi(p)=-{\rm i}\frac{d}{dt}$, where $t$ denotes the variable of functions of $\cS(\dR)$. Hence $\theta_1(x)=\pi(x)$ and $\rho(a)=\pi(a)$ for $x\in X_1, a\in A$ defines a strong $*$-representation $(\theta_1,\rho)$ of the $*$-bimodule $X_1$. 

For $x\in X_2$ we define $\theta_2(x)=\theta_1(\vartheta(x))$, that is, $\theta_2(f p^2g)=\pi(f(q))\pi(p)\pi(g(q))$ for polynomials $f(q),g(q)\in A=\dC[q]$. Since $\vartheta:X_2\mapsto X_1$ is a $*$-homomorphism of $*$-bimodules,  $(\theta_2,\rho)$ is a  strong $*$-representation of $X_2$. By this definition, we have $$ \theta_2(-2\ii p)=\theta_2(p^2q-qp^2)=\pi(pq-qp)=\pi(-\ii 1)=-\ii I,$$
so that $\theta_2(p)=\frac{1}{2}I$. Since $pq-qp=-\ii $ in  $B$, this  implies that the $*$-representation $\theta_2$ of $X_2$ does not come from a $*$-representation of the Weyl algebra $B$.
\end{exm}
\begin{exm}\label{singr}
Suppose $\cA$ is a unital $O^*$-algebra on an inner product space $\cD$ and $t$ is a symmetric operator of $\cL^+(\cD,\cH)$. Recall $x_t$ and $a\cdot x_t\cdot b$, where $a,b\in A$, denote the sesquilinear forms of $B(D,D)$ defined by (\ref{xt}) and (\ref{ostar}), respectively.
Then  $X:=\Lin \, \{a\cdot x\cdot b: a,b\in \cA\}$\, is a $*$-subbimodule  of the $*$-bimodule $\cB^+_\cA(\cD,\cD)$ for $\cA$ defined above. Let $\rho$ denote the identity $*$-representation of $\cA$, that is, $\rho(a)=a$ for $a\in \cA$. Then, by (\ref{ostar}), there is a well-defined linear mapping $\theta$ of $X$ into $B(\cD,\cD)$ given by 
\begin{align}\label{singlerep}
\theta(a\cdot x_t\cdot b)(\varphi,\psi)=\langle t\rho(b)\varphi,\rho(a^+)\psi\rangle, \quad a,b\in A,~\varphi,\psi\in \cD.
\end{align}
Clearly, $(\theta,\rho)$ is an algebraic representation of the $*$-bimodule $X$ according to Definition \ref{algrep}. 
However, if the range of $t$ is not contained in the domain of the adjoint of the operator $\rho(a^+)$, then it follows from (\ref{singlerep}) that the form $a\cdot x_t\cdot I$ cannot be given by a Hilbert space operator. In this case,  $(\theta,\rho)$ is not a $*$-representation of the $*$-bimodule $X$ in the sense of  Definition \ref{rep}. If $t\in \cL^+(\cD)$, then it is obvious that $(\theta,\rho)$ is a strong $*$-representation of the $*$-bimodule $X$.
\end{exm}

Definitions \ref{rep} and \ref{repstrong} are two reasonable ways to define Hilbert space representations of $*$-bimodules. In both cases the elements of the image $\theta(X)$ are {\it operators}, in the first case of $\cL^+(\cD,\cH)$ and in the second case of $\cL^+(\cD)$.

Example \ref{singr} and equation (\ref{singlerep}) suggests another  possibility to define Hilbert space $*$-representations of  $*$-bimodules. Let $T$ be a distinguished generating set of a $*$-bimodule $X$, that is, $X$ is the smallest $*$-subbimodule  containing $T$. 
\begin{dfn}
 A \emph{weak $*$-representation} of $X$  on an inner product space $\cD$ (with respect to the generating set $T$) is a pair $(\theta,\rho)$ of a linear $*$-preserving mapping of $X$ into $B(\cD,\cD)$ and a nondegenerate $*$-representation $\rho$ of $A$ on $\cD$ such that $\theta(t)\in \cL^+(\cD,\cH)$ and
\begin{align*}
\theta(a\cdot t\cdot b)(\varphi,\psi)=\langle t\rho(b)\varphi,\rho(a^+)\psi\rangle\quad {\rm for}~~~ t\in T, a,b\in A, \varphi,\psi\in \cD.
\end{align*}
\end{dfn}
Obviously, each $*$-representation is a weak $*$-representation. For a weak $*$-representation, the image $\theta(t)$ of  $t\in T$ is an operator, but  in general $\theta(a\cdot t\cdot b)$\,  is only a {\it form} that is not necessarily given by an  operator.

We illustrate  Definition \ref{rep} in the case of the $*$-bimodule from Example \ref{singegen}.

\begin{exm}
Suppose $x$ is a hermitean sesquilinear form of $B(D,D)$
and  $X_x:=\Lin A\cdot x\cdot A$\,  denotes the corresponding $*$-bimodule  defined in Example \ref{singegen}. Let $(\rho,\theta)$ be a Hilbert space representation of the $*$-bimodule $X_x$ on an inner product space $(\cD,\langle\cdot,\cdot\rangle)$. Then, by Definition \ref{rep}, we have $\theta(x)\in \cB^+(\cD,\cH)$, say  $\theta(x)=x_t$ with $t\in \cL^+(\cD,\cH)$, and 
\begin{align}\label{singlerep}
\theta(a\cdot x\cdot b)(\varphi,\psi)=\langle t\rho(b)\varphi,\rho(a^+)\psi\rangle, \quad a,b\in A,~\varphi,\psi\in \cD.
\end{align}
That is, each Hilbert space representation of $X_x$ is completely determined by the $*$-representation $\rho$ of the $*$-algebra $A$ on $\cD$ and the operator 
$t\in \cL^+(\cD,\cH)$. 
\end{exm}
It is an   open problem to find reasonable sufficient conditions for a $*$-bimodule to have a faithful Hilbert space representation.

\section{A GNS-like construction for $*$-bimodules}\label{gnslike}

Suppose $f$ is a positive linear functional on the $*$-algebra $A$. First we recall  the definition of the GNS representation $\rho_f$ of $f$, see, e.g., \cite[Section 4.4]{schm2020} for details. 
Let $\cN_f:=\{a\in A: f(a^+ a)=0\}$ be the kernel of $f$.
Then the  $*$-representation $\rho_f$ acts on the quotient  space $\cD_f/\cN_f$, equipped with inner product $\langle a +\cN_f,b+\cN_f\rangle =f(b^+ a)$, by $\rho_f(a)(b+\cN_f)=ab+\cN_f$ and we have 
\begin{align}\label{gns}
f(a^+ b) =\langle \rho_f(b)\varphi_f,\rho_f(a)\varphi_f\rangle, \quad {\rm where}~~a,b\in A.
\end{align}
The equivalence class $\varphi_f:=1+\cN_f$ is an algebraically cyclic vector for $\rho_f$, that is, $\cD_f=\rho_f(A)\varphi_f$. Note that $\rho_f(1)\varphi_f=\varphi_f$.
Let $\cH_f$ denote the Hilbert space completion of the inner product space $\cD_f$.
\smallskip

Now we suppose that $X$ is a $*$-bimodule for $A$ and $F$ is a linear functional on $X$. 
We want to represent $F$ in the form 
\begin{align}\label{gns0}
F(a\cdot x\cdot b)=\langle \theta(x)\rho(b)\varphi,\rho(a^+)\varphi\rangle, \quad a,b\in A,~ x\in X,
\end{align}
 for some $*$-representation $(\theta,\rho)$ of $X$ and some vector $\varphi$ of the domain. 
 
First suppose that (\ref{gns0}) holds.
 We show that  $F$ is hermitian. Indeed,let $x\in X$. Then, since  $\rho(1)\varphi=\varphi$,
\begin{align*}
F(x^+)&=F(1\cdot x^+\cdot 1)=\langle \theta(x^+)\rho(1)\varphi,\rho(1)\varphi \rangle \\&=\langle  \varphi,\theta(x)\varphi\rangle=\overline{\langle \theta(x) \varphi,\varphi\rangle}=\overline{F(x)}.
\end{align*}
Let $f$ denote the positive functional on $A$ defined by $f(a):=\langle\rho(a)\varphi,\varphi\rangle, a\in A$. Then, by (\ref{gns0}), we obtain for $a\in A$,
\begin{align}\label{Ffesti}
|F(a^+\cdot x)|^2&=|F(a^+\cdot x\cdot 1)|^2=|\langle \theta (x)\rho(1)\varphi,\rho(a)\varphi\rangle|^2\nonumber\\&= |\langle\theta (x)\varphi,\rho(a)\varphi\rangle|^2 \leq \|\theta(x)\varphi\|^2\, \|\rho(a)\varphi\|^2 =  \|\theta(x)\varphi\|^2 f(a^+a).
\end{align}

The following theorem   shows that for any hermitian functional $F$ on $X$ satisfying an inequality of the form (\ref{Ffesti}) there is 
a representation such that (\ref{gns0}) holds.
This is one possible way to extend the  GNS construction   to $*$-bimodules.

\begin{thm}\label{GNSB}
Let $F$ be a hermitian linear functional on the $*$-bimodule $X$  and let $f$ be a positive  functional on $A$. Suppose that
for each element $x\in X$ there exists a constant $C_x>0$ such that
\begin{align}\label{assu}
|F(a^+\cdot x)|^2\leq C_x f(a^+a)\quad {\rm for}~~~a\in A.
\end{align}
 Let $\rho_f$ be the GNS representation of  $f$. 
Then there exists a $*$-representation $(\theta_F,\rho_f)$ of the $*$-bimodule $X$ on  $\cD_f$ such that 
\begin{align}\label{gns1}
F(a\cdot x\cdot b)=\langle \theta_F(x)\rho_f(b)\varphi_f,\rho_f(a^+)\varphi_f\rangle \quad {\rm for}~~a,b\in A,~ x\in X.
\end{align}

If $(\theta,\rho)$ is another $*$-representation of the $*$-bimodule $X$ on an inner product space $(\cD,(\cdot,\cdot))$ with algebraically cyclic vector $\psi$  for $\rho$ such that $f(a)=\langle \rho(a)\psi\rangle, a\in A$, and 
\begin{align}\label{unigns}
F(a\cdot x\cdot b)=( \theta(x)\rho(b)\psi,\rho(a^+)\psi)\quad {\rm for}~~~ a,b\in A, x\in X,
\end{align} then the $*$-representations $(\theta_F,\rho_f)$ and $(\theta,\rho)$ are unitarily equivalent. 
There is a unitary operator $U$ of  $\cH_f$ on the Hilbert space completion $\cH$ of $\cD$ such that $U\varphi_f=\psi$, $U\cD_f=\cD$,  $\rho(a)=U\rho_f(a)U^{-1}$, and\, $\theta(x)=U\theta_F(x)U^{-1}$ for $a\in A, x\in X$.
\end{thm}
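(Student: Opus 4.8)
The plan is to mimic the classical GNS construction, using the auxiliary functional $f$ to build the domain and the $*$-algebra representation, and using the inequality (\ref{assu}) to define the operators $\theta_F(x)$ as bounded-below-controlled operators on $\cD_f$. First I would take $\rho_f$ to be the GNS representation of $f$ on $\cD_f=\rho_f(A)\varphi_f$, with cyclic vector $\varphi_f=1+\cN_f$, as recalled before the theorem. For a fixed $x\in X$, I want to define $\theta_F(x)$ on $\cD_f$ by declaring, for $\eta=\rho_f(a)\varphi_f\in\cD_f$,
\begin{align*}
\langle\theta_F(x)\varphi_f,\eta\rangle := F(a\cdot x),
\end{align*}
and extend conjugate-linearly in $\eta$. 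The first obstacle is \emph{well-definedness}: if $\rho_f(a)\varphi_f=0$ then $f(a^+a)=0$, and then (\ref{assu}) gives $|F(a^+\cdot x^+)|^2\le C_{x^+} f(a^+a)=0$; combined with hermiticity of $F$ (so $F(a\cdot x)=\overline{F(a^+\cdot x^+)}$ after the appropriate manipulation via (\ref{starcon})) this forces $F(a\cdot x)=0$. So the formula descends to the quotient. Boundedness of the linear functional $\eta\mapsto\overline{F(a\cdot x)}$ on $\cD_f$ in the Hilbert norm is exactly what (\ref{assu}) provides: $|F(a^+\cdot x)|\le C_x^{1/2}\|\rho_f(a)\varphi_f\|$, so by Riesz there is a unique vector, call it $\theta_F(x)\varphi_f\in\cH_f$, with $\langle\theta_F(x)\varphi_f,\rho_f(a)\varphi_f\rangle=F(a\cdot x)$ for all $a$.

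Next I would define $\theta_F(x)$ on all of $\cD_f$ by $\theta_F(x)\rho_f(b)\varphi_f := \rho_f^{**}$-type extension — more precisely, I set $\theta_F(x)\rho_f(b)\varphi_f:=\theta_F(x\cdot b)\varphi_f$, where $x\cdot b\in X$ is the right action; here the point is that $\theta_F$ on the cyclic vector already determines everything because $\cD_f$ is algebraically cyclic. I must check this is well-defined (if $\rho_f(b)\varphi_f=\rho_f(b')\varphi_f$, i.e.\ $f((b-b')^+(b-b'))=0$, then $\theta_F((x\cdot b)-(x\cdot b'))\varphi_f=0$, again by the well-definedness step applied to $x\cdot(b-b')$) and linear. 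Then (\ref{gns1}) reads: $\langle\theta_F(x)\rho_f(b)\varphi_f,\rho_f(a^+)\varphi_f\rangle=\langle\theta_F(x\cdot b)\varphi_f,\rho_f(a^+)\varphi_f\rangle=F(a^+{}^+\!\cdot(x\cdot b))=F(a\cdot x\cdot b)$, using $A$-bimodule associativity $a\cdot x\cdot b=a\cdot(x\cdot b)$ and $(a^+)^+=a$. To see that $\theta_F(x)$ lands in $\cL^+(\cD_f,\cH_f)$, i.e.\ that $\cD_f\subseteq\cD(\theta_F(x)^*)$, and that $\theta_F(x^+)=\theta_F(x)^+$, I compute for $\xi=\rho_f(a)\varphi_f$, $\eta=\rho_f(b)\varphi_f$:
\begin{align*}
\langle\theta_F(x)\xi,\eta\rangle=\langle\theta_F(x\cdot a)\varphi_f,\rho_f(b)\varphi_f\rangle=F(b^+\cdot x\cdot a),
\end{align*}
while $\langle\xi,\theta_F(x^+)\eta\rangle=\overline{\langle\theta_F(x^+)\rho_f(b)\varphi_f,\rho_f(a)\varphi_f\rangle}=\overline{F(a^+\cdot x^+\cdot b)}=\overline{\overline{F(b^+\cdot x\cdot a)}}=F(b^+\cdot x\cdot a)$, where the middle equality uses hermiticity of $F$ together with (\ref{starcon}): $(a^+\cdot x^+\cdot b)^+=b^+\cdot x\cdot a$. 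So $\theta_F(x)^+\supseteq\theta_F(x^+)$ on $\cD_f$, hence $\theta_F(x)\in\cL^+(\cD_f,\cH_f)$ and $\theta_F(x)^+=\theta_F(x^+)\lceil\cD_f$. Linearity of $x\mapsto\theta_F(x)$ is immediate from linearity of $F$. This establishes that $(\theta_F,\rho_f)$ is a $*$-representation satisfying (\ref{gns1}); note $\rho_f$ is nondegenerate since $\rho_f(1)\varphi_f=\varphi_f$ generates $\cD_f$.

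For the uniqueness/unitary-equivalence part, suppose $(\theta,\rho)$ on $(\cD,(\cdot,\cdot))$ has algebraically cyclic vector $\psi$ with $f(a)=(\rho(a)\psi,\psi)$ and (\ref{unigns}) holds. I define $U_0:\cD_f\to\cD$ by $U_0\rho_f(a)\varphi_f:=\rho(a)\psi$. This is well-defined and isometric because
\begin{align*}
(\rho(a)\psi,\rho(b)\psi)=f(b^+a)=\langle\rho_f(a)\varphi_f,\rho_f(b)\varphi_f\rangle,
\end{align*}
the second identity being the GNS inner product formula (\ref{gns}); it is surjective onto $\cD$ since $\psi$ is algebraically cyclic for $\rho$, so it extends to a unitary $U:\cH_f\to\cH$ with $U\cD_f=\cD$, $U\varphi_f=\psi$. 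Intertwining of $\rho_f$ and $\rho$ is the definition of $U_0$: $U\rho_f(c)(\rho_f(a)\varphi_f)=U\rho_f(ca)\varphi_f=\rho(ca)\psi=\rho(c)U\rho_f(a)\varphi_f$. For $\theta$ versus $\theta_F$, I compare matrix coefficients on the total set $\{\rho_f(a)\varphi_f\}$: for all $a,b\in A$,
\begin{align*}
(U\theta_F(x)\rho_f(b)\varphi_f,\,U\rho_f(a^+)\varphi_f)&=\langle\theta_F(x)\rho_f(b)\varphi_f,\rho_f(a^+)\varphi_f\rangle=F(a\cdot x\cdot b)\\
&=(\theta(x)\rho(b)\psi,\rho(a^+)\psi)=(\theta(x)U\rho_f(b)\varphi_f,\,U\rho_f(a^+)\varphi_f),
\end{align*}
using (\ref{gns1}) and (\ref{unigns}). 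Since $\{U\rho_f(a^+)\varphi_f:a\in A\}=\cD$ is dense in $\cH$ and $\{U\rho_f(b)\varphi_f\}=\cD$, this forces $U\theta_F(x)U^{-1}=\theta(x)$ on $\cD$. This gives all the claimed properties of $U$.

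The main obstacle I expect is the bookkeeping around well-definedness of $\theta_F(x)$ on the whole domain: one must repeatedly use that a kernel vector $\rho_f(b)\varphi_f=0$ (i.e.\ $f(b^+b)=0$) annihilates $F(\,\cdot\cdot x\cdot b)$-type expressions, and the cleanest route is to prove once and for all the Cauchy–Schwarz-type consequence of (\ref{assu}) — namely $|F(a^+\cdot x\cdot b)|^2\le C_{?}\,f(a^+a)\,$ with the constant possibly depending on $b$ and $x$ — which follows by applying (\ref{assu}) to the element $x\cdot b\in X$ and $f(b^+b)$-control via an auxiliary application to $a=1$. Everything else is a routine transcription of GNS, with (\ref{starcon}) and hermiticity of $F$ doing the work of the $*$-algebra identity $f(b^+a)=\overline{f(a^+b)}$.
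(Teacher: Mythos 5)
Your proposal is correct and follows essentially the same route as the paper's own proof: the Riesz representation theorem applied to the linear functional controlled by (\ref{assu}) (via hermiticity of $F$), the definition $\theta_F(x)\rho_f(b)\varphi_f:=\theta_F(x\cdot b)\varphi_f$ on the cyclic domain with the same well-definedness check, and the standard cyclic-vector intertwiner for the uniqueness part. The only slip is that your initial defining display should read $\langle\theta_F(x)\varphi_f,\rho_f(a)\varphi_f\rangle=F(a^+\cdot x)$ (both sides conjugate-linear in $a$) rather than $F(a\cdot x)$ --- this corrected version is the one you actually use in all subsequent computations, so nothing breaks.
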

\begin{proof} 
This proof follows as the same pattern as the proof of the GNS representation for quasi $*$-algebras (see e.g. \cite[Theorem 1.4.8]{tfr}). However, the case of $*$-bimodules requires much more care and various modifications.

Let  $x\in X$ and $b\in A$. Then, by   assumption (\ref{assu}) and equation (\ref{gns}), we have 
\begin{align*}
|F(x\cdot b)|^2=|F((x\cdot b)^+)|^2=|F(b^+\cdot x^+)|^2\leq C_{x^+} f(b^+b)=C_{x^+}\|\rho_f(b)\varphi_f\|^2.
\end{align*}This implies that the map $\rho_f(b)\varphi_f\mapsto F(x\cdot b)$ is a (well-defined!) linear functional on the dense linear subspace $\cD_f=\rho_f(A)\varphi_f$ of the Hilbert space $\cH_f$ which is continuous in the Hilbert space norm. Therefore, by a classical result of F. Riesz, there exists a unique vector $\xi_x\in \cH_f$, depending on $x$, such that 
\begin{align}\label{f1b}F(x\cdot b)=\langle \rho_f(b)\varphi_f,\xi_x\rangle\quad {\rm for}~~ b\in A.
\end{align}
Then, for $a\in A$,
\begin{align}\label{f1a}
F(a\cdot x)=\overline{F(x^+\cdot a^+)}=\overline{\langle \rho_f(a^+)\varphi_f,\xi_{x^+}\rangle} =\langle \xi_{x^+},\rho_f(a^+)\varphi_f\rangle
\end{align}
Applying (\ref{f1b}) and (\ref{f1a}) to the bimodule equality $$F(a \cdot x\cdot b)=F((a\cdot x)\cdot b)=F(a\cdot (x\cdot b))$$ we obtain
\begin{align}\label{fab}
F(a\cdot x\cdot b)=\langle \rho_f(b)\varphi_f,\xi_{a\cdot x}\rangle=\langle \xi_{(x\cdot b)^+},\rho_f(a^+)\varphi_f\rangle
\end{align}
From this equality and the fact that $\cD_f=\rho_f(A)\varphi_f$ is dense in $\cH_f$ it follows that  $\rho_f(b)\varphi_f=0$ always implies that $\xi_{(x\cdot b)^+}=0$.
Therefore, for any $x\in X$, there is a well-defined (!) linear map $\theta_F(x)$ of $\cD_f$ into $\cH_f$ given by\, $\theta_F (x)\varphi_f(b)\varphi_f=\xi_{(x\cdot b)^+}$. Then
$\theta_F(x^+)\rho(a^+)\varphi_f=\xi_{(x^+\cdot a^+)^+}=\xi_{a\cdot x}$. Inserting these facts into (\ref{fab}) yields
\begin{align}\label{fab2}
F(a\cdot x\cdot b)=\langle \rho_f(b)\varphi_f,\theta_F(x^+)\rho(a^+)\varphi_f\rangle=\langle \theta_F (x)\varphi_f(b)\varphi_f,\rho_f(a^+)\varphi_f\rangle
\end{align}
for all $a,b\in A$. Clearly, the map $x\mapsto \theta_F(x)$ is linear. From (\ref{fab2}) we conclude that $\theta(x)\in \cL^+(\cD_f,\cH_f)$ and that $ \theta_F(x^+)=\theta_F(x)^+$ for any $x\in X$.

Next we verify that $\theta_F(x_1)\varphi_f=\varphi_f$. Let $a\in A$. Using (\ref{gns}) and (\ref{f1a}), we obtain
$$
\langle \xi_{x_1^+},\rho_f(a^+)\varphi_f\rangle=F(a\cdot x_1)=F(1^+a\cdot x_1)=\langle \rho_f(a)\varphi_f,\rho_f(1)\varphi_f\rangle=\langle \varphi_f,\rho_f(a^+)\varphi_f\rangle.
$$
Since $\rho(A)\varphi_f=\cD_f$ is dense in $\cH_f$, this implies $\varphi_f=\xi_{x_1^+}$. By construction, we have $\theta_F(x_1)\varphi_f=\theta_F(x_1)\rho_f(1)\varphi_f=\xi_{(x_1\cdot 1)^+}=\xi_{x_1^+}$, so that $\theta_F(x_1)\varphi_f=\varphi_f$.

Let $\eta,\zeta\in \cD_f$. Then there are elements $c,d\in A$ such that $\eta=\rho_f(c^+)\varphi_f$ and $\zeta=\rho(d)\varphi_f$. Applying the equality (\ref{fab2}) twice, first to the triple $\{c,a \cdot x\cdot b, d\}$ and then to\, $\{ca, x, bd\}$,
 we derive \begin{align*}
&\langle \theta_F(a \cdot x \cdot b)\zeta,\eta \rangle=\langle \theta_F(a \cdot x \cdot b)\rho_f(d)\varphi_f,\rho_f(c^+)\varphi_f \rangle\\&=F(c\cdot(a\cdot x\cdot b)\cdot d)=
F((ca)\cdot x\cdot (bd))=\langle \theta_F (x)\varphi_f(bd)\varphi_f,\rho_f((ca)^+)\varphi_f\rangle\\&=\langle \theta_F (x)\varphi_f(b)\varphi_f(d)\varphi_f,\rho_f(a^+)\rho_f(c^+)\varphi_f\rangle
=\langle \theta_F (x)\varphi_f(b)\zeta,\rho_f(a^+)\eta\rangle.
\end{align*} This shows that the pair $(\theta_F,\rho_f)$ is a Hilbert space representation of the $*$-bimodule $X$ and equation (\ref{gns1}) holds.

Finally, we prove  the uniqueness assertion. Since
\begin{align*}
\langle \rho_f(a)\varphi_f,\rho_f(a)\varphi_f\rangle =\langle \rho_f(a^+a)\varphi_f,\varphi_f\rangle= f(a^+a)=(\psi,\rho(a^+a)\psi)
=(\rho(a)\psi,\rho(a)\psi),
\end{align*}
 there is a well-defined isometric linear operator $U$ of the inner product space $\cD_f$ on the inner product space $\cD$ defined by $U(\rho_f(a)\varphi_f)=\rho(a)\psi, a\in A.$  We extend $U$ to a unitary operator of $\cH_f$ on $\cH$. The definition of $U$ in turn implies that $U\varphi_f=\psi$, $U\cD_f=\cD$, and $\rho(a)=U\rho_f(a)U^{-1}$ for $a\in A$. 

It remains to verify that $\theta(x)=U\theta_F(x)U^{-1}$ for  $x\in X$. Using the definition of the unitary operator $U$  
and equations (\ref{gns1}) and (\ref{unigns}) we derive
\begin{align*}
&(U\theta_F(x)U^{-1}(\rho(b)\psi), \rho(a^+)\psi)=(U\theta_F(x)\rho_f(b)\varphi_f,U(\rho_f(a^+)\varphi_f))\\ &=\langle\theta_F(x)\rho_f(b)\varphi_f,\rho_f(a^+)\varphi_f\rangle= F(a \cdot x\cdot b)=(\theta(x)(\rho(b)\psi), \rho(a^+)\psi).
\end{align*}
Since $\psi$ is algebraically cyclic for $\rho$ by assumption, $\rho(A)\psi=\cD$ is dense in $\cH$. Hence the preceding equality yields  $U\theta_F(x)U^{-1}\rho(b)\psi=\theta(x)\rho(b)\psi$ for all $b\in A$, so that $\theta(x)=U\theta_F(x)U^{-1}.$ 
 This completes the proof of Theorem \ref{GNSB}.
\end{proof}

Next we want to characterize the case when an operator $\theta_f(x)$, $x\in X$, is bounded.
First  we prove an auxiliary operator-theoretic lemma.
\begin{lem}\label{aux}
Let  $T$ be  a linear   operator on a Hilbert space. If there exists a constant $M\geq 0$ such that
$|\langle T\eta,\eta\rangle| \leq M\|\eta\|^2$ for all $\eta\in \cD(T)$, then  $T$ is bounded.
\end{lem}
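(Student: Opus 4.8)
The plan is to reduce the boundedness of $T$ to boundedness of its numerical range (the hypothesis) via a polarization-type identity that recovers $\langle T\varphi,\psi\rangle$ from expressions of the form $\langle T\eta,\eta\rangle$. First I would recall the standard polarization formula: for a linear operator $T$ and vectors $\varphi,\psi$ in its domain,
\begin{align*}
4\langle T\varphi,\psi\rangle=\sum_{k=0}^{3}\ii^{k}\,\langle T(\varphi+\ii^{k}\psi),\varphi+\ii^{k}\psi\rangle .
\end{align*}
Applying the hypothesis $|\langle T\eta,\eta\rangle|\leq M\|\eta\|^{2}$ to each of the four terms on the right-hand side, together with the parallelogram law $\|\varphi+\ii^{k}\psi\|^{2}\leq 2\|\varphi\|^{2}+2\|\psi\|^{2}$, yields
\begin{align*}
4\,|\langle T\varphi,\psi\rangle|\leq M\sum_{k=0}^{3}\|\varphi+\ii^{k}\psi\|^{2}
= 4M\big(\|\varphi\|^{2}+\|\psi\|^{2}\big),
\end{align*}
since $\sum_{k=0}^{3}\|\varphi+\ii^{k}\psi\|^{2}=4(\|\varphi\|^{2}+\|\psi\|^{2})$ exactly. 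Hence $|\langle T\varphi,\psi\rangle|\leq M(\|\varphi\|^{2}+\|\psi\|^{2})$ for all $\varphi,\psi\in\cD(T)$.

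Next I would homogenize this estimate. Fix $\varphi\in\cD(T)$ with $T\varphi\neq 0$ and take $\psi=\lambda\varphi$ with $\lambda>0$ a scaling parameter; then apply the inequality with $\varphi$ replaced by $\lambda^{1/2}\varphi$ and $\psi$ by $\lambda^{-1/2}\psi$ for an arbitrary $\psi\in\cD(T)$ — more directly, for $t>0$ put $\varphi_t=t\varphi$ and $\psi_t=t^{-1}\psi$, so that $\langle T\varphi_t,\psi_t\rangle=\langle T\varphi,\psi\rangle$ while the right-hand side becomes $M(t^{2}\|\varphi\|^{2}+t^{-2}\|\psi\|^{2})$. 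Minimizing over $t>0$ gives $|\langle T\varphi,\psi\rangle|\leq 2M\|\varphi\|\,\|\psi\|$ for all $\varphi,\psi\in\cD(T)$.

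Finally, for fixed $\varphi\in\cD(T)$ with $T\varphi\neq 0$, choose $\psi=T\varphi$ (which lies in the Hilbert space, not necessarily in $\cD(T)$, so I would instead take a sequence $\psi_n\in\cD(T)$ with $\psi_n\to T\varphi$ if $\cD(T)$ is dense, or simply note the estimate $|\langle T\varphi,\psi\rangle|\leq 2M\|\varphi\|\,\|\psi\|$ extends by continuity of $\psi\mapsto\langle T\varphi,\psi\rangle$ from $\cD(T)$ to its closure, which contains $T\varphi$ when $\cD(T)$ is dense). Taking $\psi=T\varphi$ then gives $\|T\varphi\|^{2}\leq 2M\|\varphi\|\,\|T\varphi\|$, whence $\|T\varphi\|\leq 2M\|\varphi\|$, and this trivially also holds when $T\varphi=0$. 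Thus $T$ is bounded with $\|T\|\leq 2M$. The only point requiring a little care — and the one I would flag as the main obstacle — is the density of $\cD(T)$ needed to pass from the sesquilinear estimate on $\cD(T)\times\cD(T)$ to the choice $\psi=T\varphi$; in the intended application (operators in $\cL^+(\cD,\cH)$) the domain $\cD$ is dense in $\cH$ by construction, so this causes no difficulty, but I would either state density as a standing hypothesis or phrase the conclusion as: $T$ extends to a bounded operator on $\overline{\cD(T)}$.
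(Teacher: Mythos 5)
Your argument is correct and follows essentially the same route as the paper's proof: polarization to recover $\langle T\varphi,\psi\rangle$ from the quadratic form, an estimate of the four norms, a scaling step to obtain $|\langle T\varphi,\psi\rangle|\leq c\,M\|\varphi\|\,\|\psi\|$, and then the choice $\psi=T\varphi$ (the paper instead takes a supremum over unit vectors $\zeta\in\cD(T)$, and uses the cruder bound $(\|\eta\|+\|\zeta\|)^2$, landing on the constant $4M$ where your exact parallelogram identity plus optimization over $t$ gives $2M$). Your closing caveat is well taken: the final step requires $T\varphi$ (or the norm of $T\varphi$) to be reachable from $\cD(T)$, i.e.\ density of the domain, a hypothesis the paper's proof also uses silently but which is harmless in the intended application where $\cD(T)=\cD_f$ is dense in $\cH_f$.
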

\begin{proof}
Let $\eta,\zeta\in \cD(T)$. By the polarization identity we have
\begin{align*}
4\langle T\eta, \zeta\rangle = &\langle T(\eta+\zeta),\eta+\zeta\rangle-\langle T(\eta-\zeta),\eta-\zeta\rangle \\ &+\ii
\langle T(\eta+\ii\zeta),\eta+\ii\zeta\rangle
-\ii \langle T(\eta-\ii\zeta),\eta-\ii\zeta\rangle.
\end{align*}
From this identity and the assumption we obtain 
\begin{align*}
|4\langle T\eta,\zeta\rangle|& \leq M\big( \|\eta+\zeta\|^2+\|\eta-\zeta\|^2+\|\eta+\ii\zeta\|^2 +\|\eta-\ii\zeta\|^2\big)\\&\leq 
4M \big(\|\eta\|+\|\zeta\|\big)^2
\end{align*}
In particular, if $\|\eta\|\leq 1$ and $ \|\zeta\|\leq 1$, we have $|\langle T\eta,\zeta\rangle|\leq 4M$. Hence, by scaling the vectors $\eta,\zeta$  it follows that $|\langle T\eta,\zeta\rangle|\leq 4M\|\eta\|\, \|\zeta\| $ for arbitrary  $\eta,\zeta\in \cD(T)$. Finally, taking the supremum over $\zeta\in \cD(T), \|\zeta\|=1$, we get $\|T\eta\|\leq4M \|\eta\|$. Thus,  $T$ is bounded.
\end{proof}

\begin{prop}\label{buononeop}
Retain the assumptions and the notation of Theorem \ref{GNSB}. Let $x\in X$. Then the operator $\theta_F(x)$ is bounded on $\cD_f$ if and only if there exists a number $\lambda>0$ such that
\begin{align}\label{boundct}
F(a^+\cdot x\cdot a)\leq \lambda f(a^+a)\quad {\rm for}~~a\in A.
\end{align}
\end{prop}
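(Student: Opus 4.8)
The plan is to show the equivalence between boundedness of the operator $\theta_F(x)$ and the quadratic estimate (\ref{boundct}), using the GNS identity (\ref{gns1}) together with the density of $\cD_f=\rho_f(A)\varphi_f$ in $\cH_f$ and the auxiliary Lemma \ref{aux}. The key translation is that, by (\ref{gns1}), for every $a\in A$ we have
\begin{align*}
F(a^+\cdot x\cdot a)=\langle \theta_F(x)\rho_f(a)\varphi_f,\rho_f((a^+)^+)\varphi_f\rangle=\langle \theta_F(x)\rho_f(a)\varphi_f,\rho_f(a)\varphi_f\rangle,
\end{align*}
so that $F(a^+\cdot x\cdot a)=\langle \theta_F(x)\eta,\eta\rangle$ for the generic vector $\eta=\rho_f(a)\varphi_f\in\cD_f$. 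Likewise $f(a^+a)=\|\rho_f(a)\varphi_f\|^2=\|\eta\|^2$ by (\ref{gns}). Hence condition (\ref{boundct}) says precisely that $|\langle\theta_F(x)\eta,\eta\rangle|\leq\lambda\|\eta\|^2$ for all $\eta$ in the dense subspace $\rho_f(A)\varphi_f=\cD_f$, after one observes that the real inequality stated in (\ref{boundct}) can be upgraded to an inequality on the modulus; I would note here that for the "if" direction only the bound on the real part is needed if one first reduces to the hermitian case, but since $\theta_F(x)\in\cL^+(\cD_f,\cH_f)$ and $\theta_F(x^+)=\theta_F(x)^+$, writing $x=x_1+\ii x_2$ with $x_1,x_2$ hermitian and applying (\ref{boundct}) to $x$ and to $x^+$ gives control of $|F(a^+\cdot x\cdot a)|$, hence of $|\langle\theta_F(x)\eta,\eta\rangle|$, by $2\lambda\|\eta\|^2$ or similar.

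For the direction "(\ref{boundct}) $\Rightarrow$ $\theta_F(x)$ bounded", I would feed the estimate $|\langle\theta_F(x)\eta,\eta\rangle|\leq M\|\eta\|^2$ (with $M$ a suitable multiple of $\lambda$), valid on $\cD(\theta_F(x))=\cD_f$, directly into Lemma \ref{aux}, which immediately yields boundedness of $T:=\theta_F(x)$. Conversely, if $\theta_F(x)$ is bounded, say $\|\theta_F(x)\eta\|\leq M\|\eta\|$ for all $\eta\in\cD_f$, then by Cauchy--Schwarz
\begin{align*}
|F(a^+\cdot x\cdot a)|=|\langle\theta_F(x)\rho_f(a)\varphi_f,\rho_f(a)\varphi_f\rangle|\leq M\|\rho_f(a)\varphi_f\|^2=M f(a^+a),
\end{align*}
so (\ref{boundct}) holds with $\lambda=M$ (the real inequality following trivially from the inequality on the modulus). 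This closes the equivalence.

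The main technical point to handle with care is the passage between the real inequality as literally written in (\ref{boundct}) and the modulus estimate required by Lemma \ref{aux}; this is where I expect the bulk of the bookkeeping. One must check that $F(a^+\cdot x\cdot a)$ is indeed a real number when $x=x^+$ — which follows because $(a^+\cdot x\cdot a)^+=a^+\cdot x^+\cdot a=a^+\cdot x\cdot a$ by the compatibility condition (\ref{starcon}), so $a^+\cdot x\cdot a\in X_\her$, and $F$ is hermitian by hypothesis — and then decompose a general $x$ into hermitian and skew parts, or alternatively observe directly via the operator picture that $\langle\theta_F(x)\eta,\eta\rangle$ has real part $\langle\theta_F(\tfrac12(x+x^+))\eta,\eta\rangle$ and imaginary part governed by $\theta_F(\tfrac{1}{2\ii}(x-x^+))$, each controlled by (\ref{boundct}) applied to the respective hermitian element. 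Apart from this, everything is a routine substitution into the GNS formula plus a single appeal to Lemma \ref{aux}.
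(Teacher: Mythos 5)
Your proposal is correct and follows essentially the same route as the paper: use (\ref{gns1}) to rewrite $F(a^+\cdot x\cdot a)$ as $\langle\theta_F(x)\eta,\eta\rangle$ for $\eta=\rho_f(a)\varphi_f$ ranging over the dense domain $\cD_f$, feed the resulting estimate into Lemma \ref{aux} for one direction, and use Cauchy--Schwarz for the converse. The point you flag about upgrading the inequality in (\ref{boundct}) to a bound on the modulus is real but is resolved in the paper simply by reading (\ref{boundct}) as $|F(a^+\cdot x\cdot a)|\leq\lambda f(a^+a)$ (which is also the only reading under which the ``if'' direction can hold, since a one-sided bound on $\langle T\eta,\eta\rangle$ does not force boundedness); with that reading your argument coincides with the paper's.
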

\begin{proof}
First we suppose that condition (\ref{boundct}) is satisfied. Then, using  (\ref{gns1}), (\ref{boundct}) and (\ref{gns}) we obtain
\begin{align}
|\langle \theta(x)\rho_f(a)\varphi_f,\rho_f(a)\varphi_f \rangle|=|F(a^+\cdot x\cdot a)|\leq \lambda f(a^+a)=\lambda_x \|\rho_f(a)\varphi_f\|^2
\end{align}
for  $a\in A$. Thus, $|\langle \theta_F(x)\eta ,\eta\rangle|\leq \lambda \|\eta\|^2$ for all $\eta\in \rho_f(A)\varphi_f=\cD(\rho_f)$. Therefore, by Lemma \ref{aux}, the operator $\theta_F(x)$ is bounded.

The converse direction follows by reserving this reasoning.
\end{proof}\begin{dfn}\label{boundF}
Let $F$ and $f$ be as in  Theorem \ref{GNSB}.
We shall say that $F$ is  \emph{bounded} with respect to $f$ if for each $x\in X$ there exists a number $\lambda_x>0$ such that
$$
F(a^+\cdot x\cdot a)\leq \lambda_x f(a^+a)\quad {\rm for}~~a\in A.
$$
\end{dfn} 

Combining Proposition \ref{buononeop} and Definition \ref{boundF} yields the following corollary.

\begin{cor}
Let $F$ and $f$ be as in  Theorem \ref{GNSB}. Then all operators $\theta_F(x),$ $x\in X$, are bounded on $\cD_f$ if and only if the functional $F$ is bounded with respect to $f$.
\end{cor}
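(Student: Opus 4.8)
The plan is to observe that this corollary is an immediate consequence of Proposition \ref{buononeop} once the quantifiers are lined up. Proposition \ref{buononeop} asserts, for a \emph{fixed} $x\in X$, that the operator $\theta_F(x)$ is bounded on $\cD_f$ if and only if there exists $\lambda>0$ with $F(a^+\cdot x\cdot a)\leq \lambda f(a^+a)$ for all $a\in A$. By Definition \ref{boundF}, the functional $F$ is bounded with respect to $f$ precisely when such a constant $\lambda_x$ exists for \emph{every} $x\in X$. So the work amounts to applying Proposition \ref{buononeop} pointwise in $x$ and then quantifying.

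For the forward implication I would assume that $\theta_F(x)$ is bounded on $\cD_f$ for all $x\in X$. Fixing an arbitrary $x\in X$ and applying Proposition \ref{buononeop} to this $x$ produces a number $\lambda_x>0$ satisfying $F(a^+\cdot x\cdot a)\leq \lambda_x f(a^+a)$ for $a\in A$. Since $x$ was arbitrary, this is exactly the statement that $F$ is bounded with respect to $f$ in the sense of Definition \ref{boundF}.

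For the converse I would run the same argument backwards: if $F$ is bounded with respect to $f$, then for each $x\in X$ the constant $\lambda_x$ furnished by Definition \ref{boundF} satisfies the hypothesis of Proposition \ref{buononeop}, whence $\theta_F(x)$ is bounded on $\cD_f$. As $x$ ranges over $X$ we conclude that all the operators $\theta_F(x)$ are bounded, which is the remaining implication.

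There is no genuine obstacle here; the only point meriting attention is that both sides of the claimed equivalence are "for all $x\in X$" statements about the very same pointwise boundedness property, so the biconditional follows termwise from Proposition \ref{buononeop}. Equivalently, one may simply note that the biconditional of Proposition \ref{buononeop}, quantified universally over $x\in X$, is verbatim the asserted biconditional after Definition \ref{boundF} is unfolded.
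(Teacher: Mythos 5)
Your proposal is correct and matches the paper exactly: the paper offers no separate argument, stating only that the corollary follows by combining Proposition \ref{buononeop} with Definition \ref{boundF}, which is precisely the pointwise-in-$x$ application and universal quantification you spell out.
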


We illustrate this with a simple example. 

\begin{exm}
Let $A=\dC[x]$.   Then $X=e^{-x^2}\dC[x]$ is a $*$-bimodule for $A$, see Example \ref{exminusx2}. 
 Let $f$ be a real-valued Borel function on $\dR$ and  $\mu$  a Radon measure on $\dR$ such that the functions $fp$, $f^2p$,  $p$ are $\mu$-integrable for all $p\in A$. We define linear functionals $F$ on $X$ and $f$ on $A$ by
 $$
 F(e^{-x^2}p)=\int f(x)p(x)d\mu(x)\quad {\rm and}~~~ f(p)=\int p(x) d\mu(x), \quad   p\in A.$$ 
 Then the assumptions of  Theorem \ref{GNSB} are fulfilled. We have 
 $\rho_f(p)q=pq$ and $\theta_F(e^{-x^2}p)q= fpq$  for $q\in \cD_f\subseteq L^2(\dR;\mu)$, $p\in A$, and equation (\ref{gns}) holds.

 By appropriate choices of the measure $\mu$ and the function $f$ we see that the following four cases  can happen: \\
$\bullet$~  All operators $\theta_F(x)$ and $\rho_f(p)$  are bounded.\\
$\bullet$~  All operators $\theta_F(x)$ are bounded and all operators $\rho_f(p), p\neq 0,$ are unbounded.\\
$\bullet$~  All operators $\theta_F(x)$, $x\neq 0$, are unbounded and all operators $\rho_f(p)$ are bounded.\\
$\bullet$~  All operators $\theta_F(x)$, $x\neq 0$, and  $\rho_f(p), p\neq 0,$ are unbounded.  
\end{exm}

\section{An Example Concerning Theorem \ref{GNSB}}\label{anexampleweyl}

Let us  retain the notation of Example \ref{weyl}. Recall that $A$ is the $*$-subalgebra $\dC[q]$ of the Weyl algebra $B$ and we set $d={\rm i}p$. We consider the $*$-bimodule $X\subseteq B$ for $A$ defined by
\begin{align*}
X=\Lin \{ ap^2 b: a,b\in \dC[q]\}.
\end{align*}

Let $\mu$ be a Radon measure on $\dR$ such that all polynomials are $\mu$-integrable. To avoid trivial cases we assume that $\mu$ is not supported by a finite set. Let $f$ denote the positive linear functional on $A$ given by
$$
f(a)=\int a(x) d\mu(x), \quad a\in A=\dC[q].
$$

Suppose $x=\sum_j a_jd^2 b_j =0$ in $X$. Since $da-ad=a^\prime$ in the Weyl algebra $B$, we then have  $x=\sum_j ( a_jb_j^{\prime\prime}+ a_jb_j^\prime d + a_jb_j d^2) =0$ in  $B$ and therefore
\begin{align}\label{compat}
\sum\nolimits_j a_jb_j^{\prime\prime}=\sum\nolimits_j a_jb_j^\prime=\sum\nolimits_j  a_jb_j =0.
\end{align}
From this fact it follows that there are well-defined linear functionals $F_0,F_1, F_2$ on $X$ defined by
\begin{align*}
F_0(x)&:=\int \Big(\sum\nolimits_j a_j(x)b_j(x)\Big) d\mu(x),\\ F_1(x)&:=\int \Big(\sum\nolimits_j a_j(x)b_j^\prime(x)\Big) d\mu(x),\\F_2(x)&:=\int \Big(\sum\nolimits_j a_j(x)b_j^{\prime\prime}(x)\Big) d\mu(x)
\end{align*} 
for
$$x=\sum\nolimits_j  a_jd^2b_j\in X, \quad a_j,b_j\in A.$$

We show that assumption (\ref{assu}) is fulfilled for $F_0, F_1,$ and $F_2$. We carry out this proof for $F_0$. Let $x=\sum_j a_j d^2 b_j\in X$, $a\in A$ and set $h:=\sum_j a_jb_j\in A$. Using the Cauchy-Schwarz inequality for the positive functional $f$ we derive
\begin{align*}
|F_0&(a^+\cdot  x)|^2= \Big|\sum\nolimits_j F_0( aa_j d^2 b_j)\Big| =\Big|\int \Big(\sum\nolimits_j a(x)a_j(x)b_j(x)\Big) d\mu\Big|^2 \\ &= \Big|\int a(s)h(x) d\mu\Big|^2= |(ah)|^2\leq f(a^+a)f(h^+h),
\end{align*}
which gives (\ref{assu}) with $C_x=f(h^+h)$ for $F_0$. Replacing $h$ by $\sum_j a_jb_j^\prime$ and $\sum_j a_jb_j^{\prime\prime}$, respectively, the cases of $F_1$ and $F_2$ are treated in a similar manner.

Now we turn to the GNS representation $\rho_{f}$ of $f$.
Clearly, the inner product of   $\cD_{f}$ is the inner product inherited from  $L^2(\dR;\mu)$. (Since $\mu$ has no finite support,   $\cN_{f}=\{0\}$ and
we do not need equivalence classes.)
The GNS representation $\rho_{f}$ of $A$ acts on  $\cD_{f}=\dC[x]$ by $\rho_{f}(a)b=ab$ and we have  $\varphi_{f}=1.$ Then $f(a)=\langle \rho_{f}(a)\varphi_{f},\varphi_{f}\rangle$ for $a\in A$. 

For $x=\sum_j a_jd^2b_j\in X$ and $b\in A$ we define
\begin{align*}
\theta_{F_0}&\Big(\sum\nolimits_j a_jd^2b_j\Big)b\varphi_{f}=\sum\nolimits_j a_jb_jb \varphi_{f},\\ \theta_{F_1}&\Big(\sum\nolimits_j a_jd^2b_j\Big)b\varphi_{f}=\sum\nolimits_j a_j(b_jb)^\prime \varphi_{f},\\ \theta_{F_2}&\Big(\sum\nolimits_j a_jd^2b_j\Big)b\varphi_{f}=\sum\nolimits_j a_j(b_jb)^{\prime\prime} \varphi_{f}.
\end{align*} 
Since $x=0$ in $X$ implies the equations in  (\ref{compat}), these definitions give  well-defined (!) linear maps\, $\theta_{F_k}:X\mapsto \cL^+(\cD_{f})$ for $k=0,1,2$.  It is easily verified that the three pairs $(\theta_{F_k},\rho_{f})$, $k=0,1,2,$ are {\it strong $*$-representations} of the $*$-bimodule $X$ on $\cD_{f}$.

Inserting the preceding definitions yields for $x=\sum_j a_jd^2b_j\in X$ and $a,b\in A$, 
\begin{align*}
F_0(a\cdot x\cdot b)&=\int\Big(a(x)a_j(x)b_j(x) b(x)\Big) d\mu=\langle \theta_{F_0}(x)\rho_{f}(b)\varphi_{f},\rho_{f}(a^+)\varphi_{f}\rangle,\\ F_1(a\cdot x\cdot b)&=\int\Big(a(x)a_j(x)(b_jb)^\prime (x) \Big) d\mu=\langle \theta_{F_1}(x)\rho_{f}(b)\varphi_{f},\rho_{f}(a^+)\varphi_{f}\rangle,\\ F_2(a\cdot x\cdot b)&=\int\Big(a(x)a_j(x)(b_jb)^{\prime\prime}(x) \Big) d\mu=\langle \theta_{F_2}(x)\rho_{f}(b)\varphi_{f},\rho_{f}(a^+)\varphi_{f}\rangle.
\end{align*}
This shows that in all three cases equation (\ref{gns1}) is satisfied, so  the preceding formulas give the GNS-like representation of the pair of functionals $(F_k,f)$. For the generator $d^2$ of the $*$-bimodule $X$ we have in particular 
\begin{align*}
\theta_{F_0}(d^2)=I, ~~ \theta_{F_1}(d^2)=\frac{d}{dx},~~ \theta_{F_2}(d^2)=\Big(\frac{d}{dx}\Big)^2.
\end{align*}

It should be emphasized that the linear functionals $f$,  so its GNS representation $\rho_f$, and $F_k$, so   the mapping $\theta_{F_k}$, essentially depends on the measure $\mu$.  Further, the same functional $f$ can used for all three functionals $F_0, F_1, F_2$ on $X$ to satisfy assumption  (\ref{assu}).

\bibliographystyle{amsalpha}

\end{document}